\newtheorem{theorem}{Theorem}[section]
\newtheorem{proposition}[theorem]{Proposition}
\newtheorem{lemma}[theorem]{Lemma}
\newtheorem{remark}[theorem]{Remark}
\numberwithin{equation}{section}
\begin{document}

\begin{center}
\bfseries\Large
Partial Chebyshev Polynomials and Fan Graphs
\end{center}

\begin{center}
Wojciech M{\l}otkowski \\
Instytut Matematyczny \\
Uniwersytet Wroc{\l}awski \\
Plac Grunwaldzki 2, Wroc{\l}aw, 50-384 Poland \\
wojciech.mlotkowski@math.uni.wroc.pl
\\
\medskip
and
\\
\medskip
Nobuaki Obata \\
Center for Data-driven Science and Artificial Intelligence \\
Tohoku University \\
Sendai, 980-8576 Japan \\
obata@tohoku.ac.jp
\end{center}

\bigskip
{\bfseries Abstract} \enspace
Motivated by the product formula of the Chebyshev polynomials 
of the second kind $U_n(x)$, we newly introduce 
the partial Chebyshev polynomials
$U^{\mathrm{e}}_n(x)$ and $U^{\mathrm{o}}_n(x)$
and derive their basic properties,
relations to the classical Chebyshev polynomials,
and new factorization formulas for $U_n(x)$.
In order to calculate the quadratic embedding constant
(QEC) of a fan graph $K_1+P_n$,
we derive a new polynomial $\phi_n(x)$ which is factorized 
by partial Chebyshev polynomial $U^{\mathrm{e}}_n(x)$.
We prove that $\mathrm{QEC}(K_1+P_n)$ is given in terms of the
minimal zero of $\phi_n(x)$,
and obtain the explicit value of $\mathrm{QEC}(K_1+P_n)$ for
an even $n$ and its reasonable exstimate for an odd $n$.

{\bfseries Keywords}\enspace
Chebychev polynomials, factorization,
fan graph, distance matrix,
partial Chebychev polynomials, quadratic embedding constant

{\bfseries 2020 Mathematics Subject Classification}
\enspace 05C50, 06C76, 51K99, 42C05

\section{Introduction}

The Chebyshev polynomials, known as classical ortho\--gonal polynomials,
are widely used in various fields such as
differential and integral equations, approximation theory, 
asymptotic theory, mathematical statistics, and so on, 
see e.g., \cite{Mason-Handscomb2003, Rivlin1974}.
On the other hand, the algebraic graph theory has a long history 
and recently, much attention has been drawn not only to
the adjacency and combinatorial Laplace matrices
but also to the distance matrix
\cite{Bapat2010, Biggs1993, Brouwer-Haemers2012}.
A first connection between the Chebyshev polynomials and
algebraic graph theory is likely found in the fact that
the characteristic polynomial 
of the adjacency matrix $A_n$ of the path $P_n$ is given by
the Chebyshev polynomial of the second kind 
in such a way that $\det(xI-A_n)=U_n(x/2)=\Tilde{U}_n(x)$,
see e.g., \cite{Biggs1993, Cvetkovic-Rowlinson-Simic2010}.
From this observation
we may expect and forsee many fruitful developments.
Indeed, in chemical graph theory the Chebyshev polynomials
have been efficiently used for the study of characteristic polynomials
and matching polynomials of some special graphs
\cite{Balasubramanian2023,Balasubramanian2025,Hosoya-Randic1983}.
Moreover, the distance matrix provides important characteristics of
graphs such as distance level patterns, Wiener indices,
and distance spectra \cite{Mihalic-etal1992,Fowler-Caporossi-Hansen2001}.

Motivated by the aforementioned background and the embedding problem 
in Euclidean distance geometry
\cite{Alfakih2018,Balaji-Bapat2007, Deza-Laurent1997, Jaklic-Modic2013, 
Liberti-Lavor-Maculan-Mucherino2014}
tracing back to Blumenthal \cite{Blumenthal1953} and
Schoenberg \cite{Schoenberg1935,Schoenberg1938},
we arrived at the introduction of the quadratic embedding constant (QEC)
of a finite connected graph \cite{Obata2017,Obata-Zakiyyah2018}.
In recent years, research on the QEC has progressed for various classes of 
graphs \cite{Choudhury-Nandi2023, Irawan-Sugeng2021, Mlotkowski2022,
Obata2023b, Lou-Obata-Huang2022,MO-2018},
and interest in classification based on the QEC has been increasing
\cite{Baskoro-Obata2021, Baskoro-Obata2024,MSW2024,Obata2023a}.

In this paper, 
using the newly introduced ``partial Chebyshev polynomials,''
we solve the problem of evaluating the QEC of a fun graph $K_1+P_n$, 
that is, the graph join of the singleton graph $K_1$ and the path $P_n$ with $n\ge1$.
The fan graphs, as one of the basic families of graphs,
have been studied from various aspects,
for example, 
matching polynomials \cite{Balasubramanian2025},
coloring and chromatic polynomials
\cite{Falcon-Venkatachalam-Gowri-Nandini2021,
Maulana-Wijaya-Santoso2018,Roy2017},
the distance matrices \cite{Hao-Li-Zhang2022},
graph spectrum \cite{Liu-Yuan-Das2020},
subtree problem \cite{Yang-Wang-etal2019}, and so forth.
The calculation of $\mathrm{QEC}(K_1+P_n)$ is
not straightforward, contrary to the seemingly simple structure of a fan graph.
However, this can also be inferred from the heavy work on
$\mathrm{QEC}(P_n)$ in \cite{Mlotkowski2022}.
In the previous paper \cite{Mlotkowski-Obata2025},
after rather tedius computation based on differential calculus,
we evaluated $\mathrm{QEC}(K_1+P_n)$ and 
conjectured that $\mathrm{QEC}(K_1+P_n)$ forms a strictly
increasing sequence converging to $0$.
In this paper, by using the partial Chebyshev polynomials,
not only is the evaluation of $\mathrm{QEC}(K_1+P_n)$ greatly simplified, 
but the conjecture is also positively resolved.
Thus, our results form the foundation for the problem of
classifying graphs along the increasing sequence
$-1/2=\mathrm{QEC}(K_1+P_3)<\mathrm{QEC}(K_1+P_4)<\dots
\rightarrow0$,
while an attempt to classify graphs
along the increasing sequence 
$-1=\mathrm{QEC}(P_2)<\mathrm{QEC}(P_3)<\dots
\rightarrow-1/2$ has already begun \cite{Baskoro-Obata2021,Baskoro-Obata2024}. 

The paper is organized as follows.
In Section \ref{sec:Partial Chebyshev Polynomials} we define
the partial Chebyshev polynomials and show their basic properties
including some new factorization formulas.
In Section \ref{03sec:Quadratic Embedding Constants of Fan Graphs}
we formulate the main problem of calculating 
$\mathrm{QEC}(K_1+P_n)$,
derive preliminary formulas
using the new polynomials $\phi_n(x)$
and state the main results
(Theorems \ref{04thm:main formula for fan 1}
and \ref{04thm:main formula for fan 2}).
In Section \ref{sec:New Polynomials} we derive 
a factorization of $\phi_n(x)$ 
by means of partial Chebyshev polynomials
and we complete the proof of our main results
by determining the minimal zeroes of $\phi_n(x)$.

\section{Partial Chebyshev Polynomials}
\label{sec:Partial Chebyshev Polynomials}

\subsection{Motivation}

Following the standard notation,
for $n\ge0$ the \textit{Chebyshev polynomial of the second kind}
(of order $n$) is defined by
\begin{equation}\label{01eqn:def of U_n(x)}
U_n(x)=\frac{\sin(n+1)\theta}{\sin\theta},
\qquad x=\cos\theta.
\end{equation}
The point of our departure is the 
product formula of $U_n(x)$ given by
\begin{equation}\label{01eqn:factorization of Un(x)}
U_n(x)=\prod_{k=1}^n 2\left(x-\cos\frac{k\pi}{n+1}\right),
\qquad
n\ge1,
\end{equation}
which follows by examining the zeroes of $U_n(x)$
in the definition \eqref{01eqn:def of U_n(x)}.
As we will show in Theorems \ref{02thm:U=UeUo}
and \ref{02thm:product expressions},
this product can be naturally split into ``even part'' and ``odd part.''
In the following sections,
for clear comparison with classical Chebyshev polynomials
we define the partial Chebyshev polynomials
as quotients of trigonometric functions in a similar spirit
as the classical ones.
We then derive the factorization
\[
U_n(x)=U^{\mathrm{e}}_n(x)\cdot U^{\mathrm{o}}_n(x)
\]
and other basic properties.

\subsection{Definition and Recurrence Relations}
\label{02subsec:Partial Chebyshev Polynomials}

With each $n\ge0$ we associate new polynomials
$U^{\mathrm{o}}_n(x)$ and $U^{\mathrm{e}}_n(x)$ defined by
\begin{align}
U^{\mathrm{e}}_n(x)
&=\begin{cases}
\displaystyle\frac{\sin(n+1)\theta/2}{\sin\theta/2}
&\text{if $n$ is even,}\\[8pt]
\displaystyle\frac{\sin (n+1)\theta/2}{\sin\theta}
&\text{if $n$ is odd,}
\end{cases}
\qquad x=\cos\theta,
\label{02eqn:def of Ue} \\[4pt]
U^{\mathrm{o}}_n(x)
&=\begin{cases}
\displaystyle\frac{\cos(n+1)\theta/2}{\cos\theta/2}
&\text{if $n$ is even,}\\[8pt]
\displaystyle 2\cos\frac{(n+1)\theta}{2}
&\text{if $n$ is odd,}
\end{cases}
\qquad x=\cos\theta.
\label{02eqn:def of Uo}
\end{align}
In fact, by elementary trigonometric formulas and induction 
argument we easily see that the right-hand sides of 
\eqref{02eqn:def of Ue} and \eqref{02eqn:def of Uo} are
polynomials of $\cos\theta$.
Moreover, the following results are obtained
directly from \eqref{01eqn:def of U_n(x)},
\eqref{02eqn:def of Ue} and \eqref{02eqn:def of Uo}
with elementary trigonometric formulas.

\begin{theorem}\label{02thm:U=UeUo}
For $n\ge0$ we have the factorization
$U_n(x)=U^{\mathrm{e}}_n(x)\cdot U^{\mathrm{o}}_n(x)$.
\end{theorem}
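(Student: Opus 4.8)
The plan is to verify the identity $U_n(x) = U^{\mathrm{e}}_n(x)\cdot U^{\mathrm{o}}_n(x)$ directly from the trigonometric definitions by substituting $x=\cos\theta$ and reducing the product of the right-hand sides to the defining ratio $\sin(n+1)\theta/\sin\theta$ for $U_n$. Since both $U^{\mathrm{e}}_n$ and $U^{\mathrm{o}}_n$ are defined by cases according to the parity of $n$, I would split the argument into two cases accordingly, and in each case the computation reduces to a single application of the double-angle identity for $\sin$.

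For the even case, I would multiply the two definitions to obtain
\[
U^{\mathrm{e}}_n(x)\,U^{\mathrm{o}}_n(x)
=\frac{\sin(n+1)\theta/2}{\sin\theta/2}\cdot
\frac{\cos(n+1)\theta/2}{\cos\theta/2}
=\frac{\sin(n+1)\theta/2\,\cos(n+1)\theta/2}
{\sin\theta/2\,\cos\theta/2}.
\]
Applying the identity $2\sin\alpha\cos\alpha=\sin2\alpha$ to both numerator and denominator with $\alpha=(n+1)\theta/2$ and $\alpha=\theta/2$ respectively, the factors of $2$ cancel and this collapses to $\sin(n+1)\theta/\sin\theta=U_n(x)$, as required.

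For the odd case, I would similarly form the product
\[
U^{\mathrm{e}}_n(x)\,U^{\mathrm{o}}_n(x)
=\frac{\sin(n+1)\theta/2}{\sin\theta}\cdot 2\cos\frac{(n+1)\theta}{2}
=\frac{2\sin(n+1)\theta/2\,\cos(n+1)\theta/2}{\sin\theta},
\]
and again invoke $2\sin\alpha\cos\alpha=\sin2\alpha$ in the numerator with $\alpha=(n+1)\theta/2$ to obtain $\sin(n+1)\theta/\sin\theta=U_n(x)$. This completes both cases. I would also note the boundary value $n=0$, where $U_0(x)=1$ and the case definitions give $U^{\mathrm{e}}_0=U^{\mathrm{o}}_0=1$, so the factorization holds trivially there as well.

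The computation itself is entirely routine, so there is no genuine obstacle at the level of the algebra. The only point requiring a word of care is the implicit claim that these ratios of trigonometric functions are in fact polynomials in $x=\cos\theta$ and are well-defined as polynomial identities rather than merely identities of functions on a restricted domain; but this is exactly what the paragraph preceding the theorem asserts (established by elementary trigonometric formulas and induction). Since an identity between polynomials that holds for all $x=\cos\theta$ with $\theta\in(0,\pi)$ holds identically, the trigonometric verification suffices, and no separate polynomial-level argument is needed.
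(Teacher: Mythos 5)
Your proof is correct and matches the paper's approach: the paper states that Theorem \ref{02thm:U=UeUo} follows ``directly from \eqref{01eqn:def of U_n(x)}, \eqref{02eqn:def of Ue} and \eqref{02eqn:def of Uo} with elementary trigonometric formulas,'' which is precisely your parity-split double-angle computation. Your closing remark on passing from the trigonometric identity on $\theta\in(0,\pi)$ to a polynomial identity is a sound way to make explicit what the paper leaves implicit.
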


\begin{theorem}\label{02thm:Ue and Uo by U}
For $n\ge0$ we have
\begin{align}
U^{\mathrm{e}}_{2n}(x)&=U_{n}(x)+U_{n-1}(x),
&U^{o}_{2n}(x)&=U_n(x)-U_{n-1}(x),
\qquad
\label{for:ueuoeven}\\
U^{\mathrm{e}}_{2n+1}(x)&=U_n(x),
&U^{\mathrm{o}}_{2n+1}(x)&=U_{n+1}(x)-U_{n-1}(x),
\label{for:ueuoodd}
\end{align}
where we tacitly understand that $U_{-1}(x)=0$.
\end{theorem}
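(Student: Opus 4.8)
The plan is to verify each of the four identities directly from the trigonometric definitions \eqref{02eqn:def of Ue}--\eqref{02eqn:def of Uo} of the partial Chebyshev polynomials and the definition \eqref{01eqn:def of U_n(x)} of $U_n$, using nothing beyond the sum-to-product formulas for sines. Since every quantity appearing is a polynomial in $x=\cos\theta$ (as noted immediately after \eqref{02eqn:def of Uo}), it suffices to establish each identity as an identity in $\theta$ on any open interval on which the relevant denominators do not vanish; the polynomial identity theorem then extends it to all $x$. Throughout I write $x=\cos\theta$ and use $\sin\theta=2\sin(\theta/2)\cos(\theta/2)$ together with $\sin A+\sin B=2\sin\frac{A+B}{2}\cos\frac{A-B}{2}$ and $\sin A-\sin B=2\cos\frac{A+B}{2}\sin\frac{A-B}{2}$.

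For the two even-index formulas in \eqref{for:ueuoeven} I would place $U_n+U_{n-1}$ and $U_n-U_{n-1}$ over the common denominator $\sin\theta$. In the sum, the numerator $\sin(n+1)\theta+\sin n\theta$ becomes $2\sin\frac{(2n+1)\theta}{2}\cos\frac{\theta}{2}$, and after writing $\sin\theta=2\sin\frac{\theta}{2}\cos\frac{\theta}{2}$ the factor $\cos\frac{\theta}{2}$ cancels, leaving exactly $\sin\frac{(2n+1)\theta}{2}/\sin\frac{\theta}{2}=U^{\mathrm{e}}_{2n}(x)$. In the difference, $\sin(n+1)\theta-\sin n\theta=2\cos\frac{(2n+1)\theta}{2}\sin\frac{\theta}{2}$, and now the factor $\sin\frac{\theta}{2}$ cancels, giving $\cos\frac{(2n+1)\theta}{2}/\cos\frac{\theta}{2}=U^{\mathrm{o}}_{2n}(x)$.

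For the odd-index formulas in \eqref{for:ueuoodd}, the first is immediate: since $\sin\frac{(2n+2)\theta}{2}=\sin(n+1)\theta$, the definition gives $U^{\mathrm{e}}_{2n+1}(x)=\sin(n+1)\theta/\sin\theta=U_n(x)$ with no computation. For the second I would apply the difference formula with $A=(n+2)\theta$ and $B=n\theta$, obtaining $\sin(n+2)\theta-\sin n\theta=2\cos(n+1)\theta\,\sin\theta$; dividing by $\sin\theta$ yields $U_{n+1}(x)-U_{n-1}(x)=2\cos(n+1)\theta=U^{\mathrm{o}}_{2n+1}(x)$.

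Finally I would check that the convention $U_{-1}(x)=0$ is consistent at the boundary: for $n=0$ the even-index formulas read $U^{\mathrm{e}}_{0}=U^{\mathrm{o}}_{0}=1$, matching the definitions since both quotients equal $1$ there. There is no genuine obstacle here beyond careful case bookkeeping; the only point requiring a word of care is the passage from the trigonometric identities (valid only where the denominators are nonzero) to honest polynomial identities, which is dispatched once and for all by the remark that both sides are polynomials in $x$. An alternative would be an induction based on the recurrence $U_{n+1}=2xU_n-U_{n-1}$ and the corresponding recurrences for $U^{\mathrm{e}}_n$ and $U^{\mathrm{o}}_n$, but the direct trigonometric computation above is shorter and more transparent, and it also makes the companion factorization $U_n=U^{\mathrm{e}}_n\cdot U^{\mathrm{o}}_n$ of Theorem \ref{02thm:U=UeUo} visibly consistent with \eqref{for:ueuoeven}.
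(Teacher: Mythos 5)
Your proof is correct and follows exactly the route the paper intends: the paper states that Theorem \ref{02thm:Ue and Uo by U} follows ``directly from \eqref{01eqn:def of U_n(x)}, \eqref{02eqn:def of Ue} and \eqref{02eqn:def of Uo} with elementary trigonometric formulas,'' and your sum-to-product computations are precisely the omitted details. Your closing remark about an inductive alternative via the three-term recurrence is also what the paper notes in the remark following the theorem, so nothing in your argument diverges from the source.
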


\begin{remark}\normalfont
It is well known that the Chebyshev polynomials $U_n(x)$ fulfill
the three-term recurrence relation:
\begin{equation}\label{02eqn:recurrence of Un(x)}
P_{n+2}(x)-2x P_{n+1}(x)+P_{n}(x)=0,
\qquad n\ge0.
\end{equation}
We see that the four series of polynomials
$U^{\mathrm{e}}_{2n}(x), U^{\mathrm{o}}_{2n}(x), 
U^{\mathrm{e}}_{2n+1}(x)$ and $U^{\mathrm{o}}_{2n+1}(x)$
satisfy the same three-term recurrence relation as in
\eqref{02eqn:recurrence of Un(x)} with different initial conditions.
Since any polynomials $P_n(x)$ satisfying
\eqref{02eqn:recurrence of Un(x)} is expressible
as a linear combination of $U_n(x)$ and $U_{n-1}(x)$, 
by matching the coefficients (possibly polynomials)
we may obtain Theorem \ref{02thm:Ue and Uo by U}.
\end{remark}

In some contexts, the ``compressed'' Chebyshev polynomials
of the second kind defined by $\Tilde{U}_n(x)=U_n(x/2)$ are useful.
It is well known that $\Tilde{U}_n(x)=x^n+\dotsb$ is a monic polynomial
with integer coefficients.
In this connection we also note the following result,
of which the proof is immediate by Theorem \ref{02thm:Ue and Uo by U}.

\begin{proposition}
For $n\ge0$, the compressed partial Chebyshev polynomials
$\Tilde{U}^{\mathrm{e}}_n(x)=U^{\mathrm{e}}_n(x/2)$
and $\Tilde{U}^{\mathrm{o}}_n(x)=U^{\mathrm{o}}_n(x/2)$ are
monic polynomials with integer coefficients.
\end{proposition}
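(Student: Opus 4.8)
The plan is to reduce the statement entirely to the quoted fact that the compressed Chebyshev polynomial $\Tilde{U}_n(x)=U_n(x/2)=x^n+\dotsb$ is monic of degree $n$ with integer coefficients, and then to transport this through Theorem \ref{02thm:Ue and Uo by U} by performing the substitution $x\mapsto x/2$.

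First I would rewrite the four identities of Theorem \ref{02thm:Ue and Uo by U} after substituting $x\mapsto x/2$. Abbreviating $\Tilde{U}^{\mathrm{e}}_n(x)=U^{\mathrm{e}}_n(x/2)$ and $\Tilde{U}^{\mathrm{o}}_n(x)=U^{\mathrm{o}}_n(x/2)$, this yields
\begin{align*}
\Tilde{U}^{\mathrm{e}}_{2m}(x)&=\Tilde{U}_{m}(x)+\Tilde{U}_{m-1}(x), &
\Tilde{U}^{\mathrm{o}}_{2m}(x)&=\Tilde{U}_{m}(x)-\Tilde{U}_{m-1}(x),\\
\Tilde{U}^{\mathrm{e}}_{2m+1}(x)&=\Tilde{U}_{m}(x), &
\Tilde{U}^{\mathrm{o}}_{2m+1}(x)&=\Tilde{U}_{m+1}(x)-\Tilde{U}_{m-1}(x),
\end{align*}
with the convention $\Tilde{U}_{-1}(x)=0$. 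Since each $\Tilde{U}_k(x)$ has integer coefficients and the integer-coefficient polynomials form a ring, every right-hand side is again an integer-coefficient polynomial; this disposes of the integrality assertion immediately.

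The only point requiring a moment's care, and the place I would flag as the sole (minor) subtlety, is monicity, which I would settle by degree bookkeeping. Recall that $\Tilde{U}_k(x)$ has degree exactly $k$ with leading term $x^k$. In the cases $\Tilde{U}^{\mathrm{e}}_{2m}$, $\Tilde{U}^{\mathrm{o}}_{2m}$ and $\Tilde{U}^{\mathrm{e}}_{2m+1}$, the summand of largest index is $\Tilde{U}_m$, whose degree $m$ strictly exceeds the degree $m-1$ of the other summand (when present); hence the lower-degree term can neither cancel nor perturb the top coefficient, and the leading term is $x^m$. In the remaining case $\Tilde{U}^{\mathrm{o}}_{2m+1}=\Tilde{U}_{m+1}-\Tilde{U}_{m-1}$ the degree gap is even wider, giving leading term $x^{m+1}$. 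Thus in every case the leading coefficient equals $1$, so all four families are monic. There is no genuine obstacle here: the whole statement is carried by Theorem \ref{02thm:Ue and Uo by U}, and the one thing worth verifying explicitly is precisely that adding or subtracting the strictly-lower-degree polynomial $\Tilde{U}_{m-1}$ never disturbs the leading coefficient, which the strict degree gap guarantees.
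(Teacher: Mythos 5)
Your proposal is correct and follows exactly the paper's route: the paper derives this proposition as an immediate consequence of Theorem \ref{02thm:Ue and Uo by U} together with the fact that $\Tilde{U}_n(x)$ is monic with integer coefficients, which is precisely your argument, with the integrality and degree-gap bookkeeping spelled out explicitly.
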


\begin{remark}\normalfont
Following \cite{Mason-Handscomb2003, Rivlin1974} let
$T_n(x)$, $V_n(x)$ and $W_n(x)$ be the Chebyshev polynomials 
of the first, third and fourth kinds.
It is shown by comparing the definitions that
$U^{\mathrm{e}}_{2n}(x)=W_n(x)$,
$U^{\mathrm{o}}_{2n}(x)=V_n(x)$,
$U^{\mathrm{e}}_{2n+1}(x)=U_{n}(x)$ and
$U^{\mathrm{o}}_{2n+1}(x)=2T_{n+1}(x)$ for $n\ge0$.
\end{remark}

\if0
The following table shows the first eight of the polynomials.
Therein we observe a simple relation:
$U^{\mathrm{o}}_{2n}(-x)=(-1)^n U^{\mathrm{e}}_{2n}(x)$ for $n\ge0$,
which is verified easily by definition.
\begin{center}
\begin{tabular}{|c|r|r|}
\hline
$n$ & $U^{\mathrm{e}}_n(x)$ & $U^{\mathrm{o}}_n(x)$ \\ \hline
$0$ & $1$ & $1$ \\
$1$ & $1$ & $2x$ \\
$2$ & $2x+1$ & $2x-1$ \\
$3$ & $2x$ & $4x^2-2$ \\
$4$ & $4x^2+2x-1$ & $4x^2-2x-1$ \\
$5$ & $4x^2-1$ & $8x^3-6x$ \\
$6$ & $8x^3+4x^2-4x-1$ & $8x^3-4x^2-4x+1$ \\
$7$ & $8x^3-4x$ & $16x^4-16 x^2+2$ \\
$8$ & $16x^4+8x^3-12x^2-4x+1$ 
 & $16x^4-8x^3-12x^2+4x+1$ \\
\hline
\end{tabular}
\end{center}

\fi

\subsection{Factorization}

\begin{theorem}\label{02thm:product expressions}
For $n\ge0$ we have 
\begin{equation}\label{02eqn:factorization of Ue and Uo} \\
U^{\mathrm{e}}_n(x)
=\prod_{\substack{1\le k\le n \\ \text{$k$: even}}}
 2\left(x-\cos\frac{k\pi}{n+1}\right), 
\qquad
U^{\mathrm{o}}_n(x)
=\prod_{\substack{1\le k\le n \\ \text{$k$: odd}}}
 2\left(x-\cos\frac{k\pi}{n+1}\right),
\end{equation}
where we tacitly understand that 
$U^{\mathrm{e}}_0(x)=U^{\mathrm{e}}_1(x)=U^{\mathrm{o}}_0(x)=1$
(the range of $k$ in the product is empty).
\end{theorem}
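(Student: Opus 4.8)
The plan is to deduce both factorizations from the classical product formula \eqref{01eqn:factorization of Un(x)} together with the factorization $U_n=U^{\mathrm{e}}_nU^{\mathrm{o}}_n$ of Theorem \ref{02thm:U=UeUo}. Splitting the product over $1\le k\le n$ in \eqref{01eqn:factorization of Un(x)} according to the parity of $k$, I would write $U_n(x)=P_{\mathrm{e}}(x)P_{\mathrm{o}}(x)$, where $P_{\mathrm{e}}$ and $P_{\mathrm{o}}$ denote the even-$k$ and odd-$k$ products, i.e.\ exactly the two right-hand sides appearing in \eqref{02eqn:factorization of Ue and Uo}. Since $U_n=U^{\mathrm{e}}_nU^{\mathrm{o}}_n=P_{\mathrm{e}}P_{\mathrm{o}}$, it then suffices to prove the single identity $U^{\mathrm{e}}_n=P_{\mathrm{e}}$; the companion identity $U^{\mathrm{o}}_n=P_{\mathrm{o}}$ follows at once by cancelling the nonzero factor $P_{\mathrm{e}}=U^{\mathrm{e}}_n$ in the integral domain of polynomials.

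To prove $U^{\mathrm{e}}_n=P_{\mathrm{e}}$ I would locate the zeroes of $U^{\mathrm{e}}_n$ directly from \eqref{02eqn:def of Ue}, recalling that $U^{\mathrm{e}}_n$ is a genuine polynomial in $x=\cos\theta$, and then compare degrees and leading coefficients. For $x=\cos\theta$ with $\theta\in(0,\pi)$ the denominator in \eqref{02eqn:def of Ue} is strictly positive, so the zeroes of $U^{\mathrm{e}}_n$ in $(-1,1)$ are precisely the interior points where the numerator $\sin\frac{(n+1)\theta}{2}$ vanishes, namely $\theta=\frac{2m\pi}{n+1}$ with $m\ge1$ and $\frac{2m\pi}{n+1}<\pi$. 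Setting $k=2m$ rewrites these as $x=\cos\frac{k\pi}{n+1}$ with $k$ even and $1\le k\le n$, which is exactly the zero set of $P_{\mathrm{e}}$. These points are pairwise distinct because $\cos$ is injective on $(0,\pi)$, so every zero is simple. A short parity count shows their number equals $\lfloor n/2\rfloor$, matching $\deg U^{\mathrm{e}}_n$ as read off from Theorem \ref{02thm:Ue and Uo by U}; comparing the common leading coefficient $2^{\lfloor n/2\rfloor}$ of $P_{\mathrm{e}}$ and of $U^{\mathrm{e}}_n$ then forces $U^{\mathrm{e}}_n=P_{\mathrm{e}}$.

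The main obstacle is the endpoint bookkeeping in the zero count, which is what makes the parity split unavoidable. When $n$ is even the denominator is $\sin\frac{\theta}{2}$, the bound $m<\frac{n+1}{2}$ is a half-integer and cleanly yields $m\le n/2$, so the admissible even $k$ run up to $n$. When $n$ is odd the denominator is instead $\sin\theta$, the numerator and denominator both vanish at $\theta=\pi$ (i.e.\ $x=-1$), and one must check that this coincidence is a removable singularity rather than a genuine zero, so that $x=-1$ is correctly excluded and the admissible even $k$ run only up to $n-1$. Verifying in each parity case that the listed zeroes are all genuine and exactly $\lfloor n/2\rfloor$ in number, with nothing lost or gained at the endpoints, is the one place demanding care; the remaining comparison of the two monic factorizations is routine.
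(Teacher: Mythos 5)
Your proposal is correct, and for the first identity it is essentially the paper's own argument: both locate the zeroes of $U^{\mathrm{e}}_n$ directly from the trigonometric definition \eqref{02eqn:def of Ue} (obtaining $\cos(2l\pi/(n+1))$, $1\le l\le\lfloor n/2\rfloor$), observe that their number equals $\deg U^{\mathrm{e}}_n=\lfloor n/2\rfloor$, and fix the constant by matching the leading coefficient $2^{\lfloor n/2\rfloor}$ supplied by Theorem \ref{02thm:Ue and Uo by U}. You depart from the paper only in the second identity: the paper proves the formula for $U^{\mathrm{o}}_n$ ``in a similar manner,'' i.e.\ by repeating the zero analysis for the cosine numerator in \eqref{02eqn:def of Uo}, whereas you get it for free by splitting the classical product \eqref{01eqn:factorization of Un(x)} into its even-$k$ and odd-$k$ halves $P_{\mathrm{e}}$ and $P_{\mathrm{o}}$, invoking $U_n=U^{\mathrm{e}}_nU^{\mathrm{o}}_n$ (Theorem \ref{02thm:U=UeUo}), and cancelling the common factor $P_{\mathrm{e}}=U^{\mathrm{e}}_n$ in the integral domain $\mathbb{R}[x]$. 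This cancellation is legitimate and halves the work; it is not circular, because Theorem \ref{02thm:U=UeUo} is established directly from the definitions, before and independently of Theorem \ref{02thm:product expressions}. The only cost is that the paper's subsequent remark---that Theorem \ref{02thm:U=UeUo} can be \emph{re}derived from Theorem \ref{02thm:product expressions} together with \eqref{01eqn:factorization of Un(x)}---loses its force under your route, since you consume that implication in the opposite direction. Finally, your careful exclusion of $x=-1$ for odd $n$ is correct but not actually needed: once $\lfloor n/2\rfloor$ distinct zeroes are exhibited and $\deg U^{\mathrm{e}}_n=\lfloor n/2\rfloor$, the polynomial can have no zero anywhere else, at the endpoints or otherwise.
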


\begin{proof}
We start with the ``even'' part.
Suppose first that $n\ge2$ is even.
By definition we have
\[
U^{\mathrm{e}}_n\left(\cos\frac{2l\pi}{n+1}\right)
=\frac{\sin l\pi}{\sin l\pi/(n+1)}
=0,
\qquad 1\le l\le n/2.
\]
Since $U^{\mathrm{e}}_n(x)$ is a polynomial of order $n/2$, 
we see that $\cos 2l\pi/(n+1)$ with $1\le l\le n/2$ 
exhaust the zeroes of $U^{\mathrm{e}}_n(x)$ 
and they are all simple.
The case of an odd $n\ge3$ is examined in a similar manner.
As a result, for $n\ge2$ the zeroes of $U^{\mathrm{e}}_n(x)$
are all simple and given by
$\cos 2l\pi/(n+1)$ with $1\le l \le [n/2]$.
Therefore, $U^{\mathrm{e}}_n(x)$ is a constant multiple of
\[
\prod_{\substack{1\le k\le n \\ \text{$k$: even}}}
 \left(x-\cos\frac{k\pi}{n+1}\right).
\]
It follows from Theorem \ref{02thm:Ue and Uo by U} that
the leading term of $U^{\mathrm{e}}_n(x)$ coincides with the one of
$U_m(x)=(2x)^m+\dotsb$,
where $m=n/2$ for an even $n$ and $m=(n-1)/2$ for an odd $n$.
Then, matching the coefficients of the leading terms,
we obtain the first relation 
in \eqref{02eqn:factorization of Ue and Uo}.
The second one follows in a similar manner.
\end{proof}

We note that
Theorem \ref{02thm:U=UeUo} is reproduced from
Theorem \ref{02thm:product expressions} with 
the product formula \eqref{01eqn:factorization of Un(x)}.
Moreover, we obtain factorization formulas for
the Chebyshev polynomials $U_n(x)$, which are not found in
literatures so far as we know.
While, other types of factorizations are known, see e.g., 
\cite{Keri2022,Rayes2005,Wolfram2022}.

From now on, we often write $U_n$ for $U_n(x)$.

\begin{theorem}\label{02thm:factorization of U}
For $n\ge0$ we have
\begin{align}
U_{2n} &=(U_{n}+U_{n-1})(U_n-U_{n-1}),
&U_{2n+1} &=U_n(U_{n+1}-U_{n-1}),
\label{02eqn:factorization of U_2n and U_2n+1} \\
U_{2n}-1&=U_{n-1}(U_{n+1}-U_{n-1}),
&U_{2n+1}-1&=(U_{n+1}-U_{n})(U_{n}+U_{n-1}),
\label{for:unminus1}\\
U_{2n}+1&=U_{n}(U_{n}-U_{n-2}),
&U_{2n+1}+1&=(U_{n+1}+U_{n})(U_{n}-U_{n-1}),
\label{for:unplus1}
\end{align}
where we tacitly understand 
that $U_{-1}=U_{-1}(x)=0$ and $U_{-2}=U_{-2}(x)=-1$.
\end{theorem}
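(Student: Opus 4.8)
The plan is to obtain the first row \eqref{02eqn:factorization of U_2n and U_2n+1} for free from the machinery already in place, and then to bootstrap the four ``$\pm1$'' identities from it using a single auxiliary relation. The identities in \eqref{02eqn:factorization of U_2n and U_2n+1} follow at once by combining Theorem \ref{02thm:U=UeUo} with Theorem \ref{02thm:Ue and Uo by U}: writing $U_{2n}=U^{\mathrm{e}}_{2n}U^{\mathrm{o}}_{2n}$ and inserting $U^{\mathrm{e}}_{2n}=U_n+U_{n-1}$, $U^{\mathrm{o}}_{2n}=U_n-U_{n-1}$ from \eqref{for:ueuoeven} gives the left identity, while $U_{2n+1}=U^{\mathrm{e}}_{2n+1}U^{\mathrm{o}}_{2n+1}=U_n(U_{n+1}-U_{n-1})$ from \eqref{for:ueuoodd} gives the right one. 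In particular the left identity is the difference of squares $U_{2n}=U_n^2-U_{n-1}^2$, which is the form I will exploit below.

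Next I would record the Cassini-type identity $U_n^2-U_{n+1}U_{n-1}=1$, valid for $n\ge0$ under the conventions $U_{-1}=0$ and $U_{-2}=-1$. This is a one-line computation in the representation \eqref{01eqn:def of U_n(x)}: setting $x=\cos\theta$ and applying the product-to-sum formula yields $\sin^2(n+1)\theta-\sin(n+2)\theta\,\sin n\theta=\sin^2\theta$, and dividing by $\sin^2\theta$ gives the claim as an identity of polynomials, since it holds at all $x=\cos\theta$.

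With these two ingredients the remaining identities \eqref{for:unminus1} and \eqref{for:unplus1} become pure algebra. For the even cases I substitute $U_{2n}=U_n^2-U_{n-1}^2$ and apply the Cassini identity twice: from $U_n^2-1=U_{n+1}U_{n-1}$ I get $U_{2n}-1=U_{n+1}U_{n-1}-U_{n-1}^2=U_{n-1}(U_{n+1}-U_{n-1})$, and from $U_{n-1}^2-1=U_nU_{n-2}$ I get $U_{2n}+1=U_n^2-U_nU_{n-2}=U_n(U_n-U_{n-2})$. For the odd cases I expand $U_{2n+1}=U_nU_{n+1}-U_nU_{n-1}$ together with the proposed right-hand sides; in each case the comparison, after cancelling the common term $U_nU_{n+1}-U_nU_{n-1}$, collapses exactly to the Cassini identity $U_{n+1}U_{n-1}-U_n^2=-1$, closing both the $-1$ and the $+1$ columns.

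No step presents a genuine obstacle; the only care needed is at the boundary. The conventions $U_{-1}=0$ and $U_{-2}=-1$ must be precisely these values for the formulas to hold at small $n$ (for instance $U_0+1=2=U_0(U_0-U_{-2})$ forces $U_{-2}=-1$), and $U_{-2}=-1$ is exactly what the backward use of the recurrence \eqref{02eqn:recurrence of Un(x)} produces. A fully trigonometric alternative is also available: substituting $x=\cos\theta$ and applying sum-to-product to $\sin(2n+1)\theta\pm\sin\theta$ and to $\sin(2n+2)\theta\pm\sin\theta$ verifies every identity directly, the only mildly delicate point there being the half-angle arguments $\tfrac{(2n+1)\theta}{2}$ and $\tfrac{(2n+3)\theta}{2}$ that appear in the odd-index cases and must be recombined with the factors $\sin\tfrac{\theta}{2},\cos\tfrac{\theta}{2}$ coming from the differences $U_{n+1}\pm U_n$.
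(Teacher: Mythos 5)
Your proposal is correct and follows essentially the same route as the paper: the first row is obtained by combining Theorem \ref{02thm:U=UeUo} with Theorem \ref{02thm:Ue and Uo by U}, and the four $\pm1$ identities are then deduced algebraically from it via the relation $U_n^2-U_{n+1}U_{n-1}=1$, exactly as the paper does (you merely add a proof of that relation and explicit boundary checks, which the paper cites as well known and leaves implicit).
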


\begin{proof}
We obtain \eqref{02eqn:factorization of U_2n and U_2n+1} 
by combining Theorems \ref{02thm:U=UeUo} and \ref{02thm:Ue and Uo by U}.
Then, \eqref{for:unminus1} and \eqref{for:unplus1} follow from 
\eqref{02eqn:factorization of U_2n and U_2n+1}
with the help of the well-known relation
$U_{n}^2-U_{n+1}U_{n-1}=1$.
\end{proof}

Below we record some more factorization formulas
based on the partial Chebyshev polynomials.
The proofs are computational and omitted.

\begin{theorem}\label{02thm:factorization of U (2)}
For $n\ge0$ we have
\begin{align}
U_{2n}-U_{2n-1}-1&=2(x-1)U_{n-1}(U_{n}+U_{n-1}),\\
U_{2n}+U_{2n-1}-1&=2(x+1)U_{n-1}(U_{n}-U_{n-1}),\\
U_{2n}-U_{2n-1}+1&=(U_{n}-U_{n-1})(U_{n}-U_{n-2}),\\
U_{2n}+U_{2n-1}+1&=(U_{n}+U_{n-1})(U_{n}-U_{n-2}),
\end{align}
and
\begin{align}
U_{2n+1}-U_{2n}-1&=2(x-1)U_{n}(U_{n}+U_{n-1}),\\
U_{2n+1}+U_{2n}+1&=2(x+1)U_{n}(U_{n}-U_{n-1}),\\
U_{2n+1}-U_{2n}+1&=(U_{n}-U_{n-1})(U_{n+1}-U_{n-1}),\\
U_{2n+1}+U_{2n}-1&=(U_{n}+U_{n-1})(U_{n+1}-U_{n-1}).
\end{align}
\end{theorem}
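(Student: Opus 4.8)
The plan is to reduce each of the eight identities to the factorizations of $U_{2n}\pm1$ and $U_{2n+1}\pm1$ already recorded in Theorem \ref{02thm:factorization of U}, supplemented only by the three-term recurrence \eqref{02eqn:recurrence of Un(x)}. The uniform mechanism is that every left-hand side is the sum or difference of one ``$U_m\pm1$'' term and one bare $U_{m'}$ term whose known factorizations share a common factor; after pulling out that factor, what is left is either already the claimed product or a short linear combination of consecutive $U_k$ that collapses via the recurrence.

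First I would record the index-shifted instances of Theorem \ref{02thm:factorization of U} that appear. Writing $U_{2n-1}=U_{2(n-1)+1}$ and substituting $m=n-1$ in the odd formulas gives $U_{2n-1}=U_{n-1}(U_n-U_{n-2})$, while $U_{2n}-1=U_{n-1}(U_{n+1}-U_{n-1})$ and $U_{2n}+1=U_n(U_n-U_{n-2})$ are used directly. For the even-index block I would then compute, for instance, $U_{2n}-U_{2n-1}-1=(U_{2n}-1)-U_{2n-1}=U_{n-1}\bigl[(U_{n+1}-U_{n-1})-(U_n-U_{n-2})\bigr]$, and likewise $U_{2n}+U_{2n-1}-1=U_{n-1}\bigl[(U_{n+1}-U_{n-1})+(U_n-U_{n-2})\bigr]$. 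The two identities $U_{2n}\pm U_{2n-1}+1$ instead both share the factor $U_n-U_{n-2}$ coming from $U_{2n}+1$ and $U_{2n-1}$, so they fall out at once as $(U_n\pm U_{n-1})(U_n-U_{n-2})$ with no recurrence needed.

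For the odd-index block I would pair $U_{2n+1}\pm1$ with $U_{2n}=(U_n+U_{n-1})(U_n-U_{n-1})$. The factor $U_n+U_{n-1}$ is common to $U_{2n+1}-1=(U_{n+1}-U_n)(U_n+U_{n-1})$ and $U_{2n}$, while $U_n-U_{n-1}$ is common to $U_{2n+1}+1=(U_{n+1}+U_n)(U_n-U_{n-1})$ and $U_{2n}$; extracting these leaves brackets $U_{n+1}\pm2U_n+U_{n-1}$ in the two cases that should produce a factor $2(x\pm1)$, and simply $U_{n+1}-U_{n-1}$ in the two cases that are immediate. The only genuine arithmetic is to collapse the surviving brackets: using $U_{n+1}=2xU_n-U_{n-1}$ and $U_{n-2}=2xU_{n-1}-U_n$ one finds $U_{n+1}-2U_n+U_{n-1}=2(x-1)U_n$ and $U_{n+1}+2U_n+U_{n-1}=2(x+1)U_n$, and for the even-index block $U_{n+1}-U_n-U_{n-1}+U_{n-2}=2(x-1)(U_n+U_{n-1})$ and $U_{n+1}+U_n-U_{n-1}-U_{n-2}=2(x+1)(U_n-U_{n-1})$.

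The computation is entirely routine, so there is no real obstacle; the only points that demand attention are the index bookkeeping when rewriting $U_{2n-1}$ as the odd instance with $m=n-1$, and the small-$n$ boundary cases, where one must check that the conventions $U_{-1}=0$ and $U_{-2}=-1$ make both sides agree (for example at $n=0,1$). This is precisely why the proofs are safely left as computations.
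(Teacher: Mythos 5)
Your proposal is correct, and there is nothing in the paper to compare it against: the authors state only that ``the proofs are computational and omitted,'' so your argument supplies precisely what the paper leaves out. The reduction works exactly as you describe. Pairing each left-hand side with the appropriate factorization from Theorem~\ref{02thm:factorization of U} --- using $U_{2n-1}=U_{n-1}(U_n-U_{n-2})$ (the odd formula with $n$ replaced by $n-1$), $U_{2n}\pm1$, $U_{2n+1}\pm1$, and $U_{2n}=(U_n+U_{n-1})(U_n-U_{n-1})$ --- a common factor pulls out of every identity, and the surviving brackets collapse under the recurrence; for instance
$U_{2n}-U_{2n-1}-1=U_{n-1}\bigl[U_{n+1}-U_n-U_{n-1}+U_{n-2}\bigr]=2(x-1)U_{n-1}(U_n+U_{n-1})$,
and all eight identities check out the same way, four of them needing no recurrence at all. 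Your caution about the boundary is also well placed, since the shift $m=n-1$ falls outside the stated range of Theorem~\ref{02thm:factorization of U} when $n=0$; but the conventions $U_{-1}=0$, $U_{-2}=-1$ are exactly the backward extension of the recurrence \eqref{02eqn:recurrence of Un(x)}, so the $n=0$ cases can be checked directly in one line each. The only difference in flavor from what the authors presumably intended is organizational: they describe these formulas as ``based on the partial Chebyshev polynomials,'' i.e.\ the right-hand sides are products such as $2(x-1)\,U^{\mathrm{e}}_{2n-1}(x)\,U^{\mathrm{e}}_{2n}(x)$ via Theorem~\ref{02thm:Ue and Uo by U}, whereas you route everything through Theorem~\ref{02thm:factorization of U}; since that theorem was itself derived from the partial Chebyshev factorizations, the two routes amount to essentially the same computation.
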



\section{Quadratic Embedding Constants of Fan Graphs}
\label{03sec:Quadratic Embedding Constants of Fan Graphs}

\subsection{Quadratic Embedding Constants of Graphs}

Let $G=(V,E)$ be a finite connected graph on $n=|V|\ge2$ vertices.
For two distinct vertices $i,j\in V$ 
the length of a shortest walk connecting them 
is denoted by $d(i,j)$ and is called
the \textit{graph distance} between $i$ and $j$.
We set $d(i,i)=0$ for any $i\in V$.
The \textit{distance matrix} of $G$ is defined by
$D=[d(i,j)]_{i,j\in V}$.
Following \cite{Obata2017,Obata-Zakiyyah2018}
the \textit{quadratic embedding constant} 
(\textit{QEC} for short) of $G$ is defined by
\begin{equation}\label{03eqn:def of QEC(G)}
\mathrm{QEC}(G)
=\max\{\langle f,Df \rangle\,;\, f\in C(V), \,
\langle f,f \rangle=1, \, \langle {\bf 1},f \rangle=0\},
\end{equation}
where $C(V)\cong \mathbb{R}^n$ is
the space of real column vectors $f=[f(i)]_{i\in V}$,
${\bf 1}\in C(V)$ the column vector whose entries are all one,
and $\langle\cdot,\cdot\rangle$ the canonical inner product.

The QEC provides a criterion for 
a finite connected graph $G=(V,E)$ to admit
a \textit{quadratic embedding} in a Euclidean space,
i.e., a map $\pi:V\rightarrow \mathbb{R}^N$ such that
$\|\pi(i)-\pi(j)\|^2= d(i,j)$ for any $i,j\in V$,
where the left-hand side stands for the square of the Euclidean 
distance.
Such a graph is called \textit{of QE class}.
It follows from the classical result of 
Schoenberg \cite{Schoenberg1935,Schoenberg1938}
that $G$ is of QE class if and only if 
the distance matrix $D$ is conditionally negative definite,
and hence if and only if $\mathrm{QEC}(G)\le 0$.
Beyond this motivating point there is a growing interest in 
exploring how effective the QEC works
in the classification of graphs.

\subsection{Fan Graphs}

For $n\ge1$ let $P_n$ be the path graph on $n$ vertices.
The \textit{fan graph} on $n+1$ vertices
is the graph join of the singleton graph $K_1$
and the path $P_n$, denoted by $K_1+P_n$,
namely, a graph on the vertex set
$V=\{0,1,2,\dots,n\}$ with the edge set
$E=\{\{i,i+1\}\,;\, 1\le i\le n-1\}\cup
\{\{0,i\}\,;\, 1\le i\le n\}$, see Figure \ref{fig:Fan graph}.
The distance matrix of $K_1+P_n$ is written down 
in a block matrix form:
\begin{equation}\label{03eqn:distance matrix of join}
D=\begin{bmatrix}
  0 & J \\
  J & 2J-2I-A_n
\end{bmatrix},
\qquad
A_n=
\begin{bmatrix}
0 & 1 &  \\
1 & 0 & 1 & \\
  & 1 & 0 & 1 & \\
  &         & \ddots  & \ddots & \ddots \\
  &         &         & 1       & 0 & 1 \\
  &         &         &        & 1 & 0
\end{bmatrix},
\end{equation}
where $J$ is the all-one matrix (the size is understood in the context),
$I=I_n$ the $n\times n$ identity matrix
and $A_n$ the adjacency matrix of $P_n$.

\begin{figure}[!h]
\centering\includegraphics[width=1.8in]{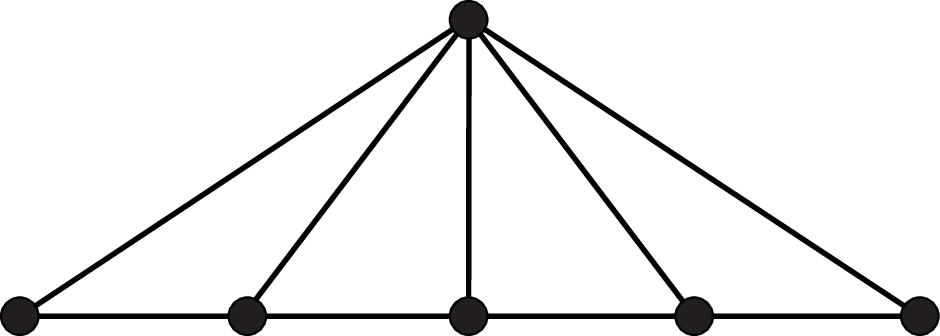}
\caption{Fan graph $K_1+P_n$ with $n=5$}
\label{fig:Fan graph}
\end{figure}

\subsection{Preliminary Formula}
\label{03subsec:Preliminary Formula}

For $\mathrm{QEC}(K_1+P_n)$ we need to find the conditional maximum
of the quadratic function:
\begin{align}
\psi(f,g)
=\left\langle \begin{bmatrix} f \\ g \end{bmatrix},
D \begin{bmatrix} f \\ g \end{bmatrix} \right\rangle 
&=2\langle f, Jg\rangle+\langle g, (2J-2I-A_n)g\rangle
\nonumber \\
&=2f\langle {\bf 1}, g\rangle
  +2\langle {\bf 1}, g\rangle^2- 2\langle g, g\rangle
  -\langle g, A_ng\rangle,
\label{03eqn:target quadratic function}
\end{align}
under the conditions
$f^2+\langle g,g\rangle=1$ and $f+\langle{\bf 1},g\rangle=0$,
where $f\in\mathbb{R}$ and $g\in\mathbb{R}^n$,
see \eqref{03eqn:def of QEC(G)}
and \eqref{03eqn:distance matrix of join}.
Upon applying the method of Lagrange's multipliers,
we set 
\begin{equation}\label{03eqn:varphi}
\varphi(f,g,\lambda,\mu)
=\psi(f,g)-\lambda(f^2+\langle g,g\rangle-1)
 -\mu(f+\langle {\bf 1},g\rangle).
\end{equation}
Let $\mathcal{S}$ be the set of stationary points
$(f,g,\lambda,\mu)\in \mathbb{R}\times
\mathbb{R}^n\times\mathbb{R}\times\mathbb{R}$ of \eqref{03eqn:varphi},
that is, the solutions to
\begin{equation}\label{03eqn:stationary points}
\frac{\partial\varphi}{\partial f}
=\frac{\partial\varphi}{\partial g_1}
=\dotsb
=\frac{\partial\varphi}{\partial g_n}
=\frac{\partial\varphi}{\partial \lambda}
=\frac{\partial\varphi}{\partial \mu}
=0.
\end{equation}
By general theory, the conditional maximum is 
attained at some stationary point.
On the other hand, we see easily 
that $\psi(f,g)=\lambda$ for any $(f,g,\lambda,\mu)\in\mathcal{S}$.
Thus, we come to the useful formula:
\begin{equation}\label{04eqn:QEC formula in general}
\mathrm{QEC}(K_1+P_n)
=\max \lambda(\mathcal{S}),
\end{equation}
where $\lambda(\mathcal{S})$ is the set of $\lambda\in\mathbb{R}$
appearing in the stationary points $\mathcal{S}$,
see \cite{Obata-Zakiyyah2018} for more details.

Calculating the derivatives 
in \eqref{03eqn:stationary points} explicitly
and switching $\lambda$ to a new variable $\alpha=-\lambda-2$,
the equations \eqref{03eqn:stationary points} are transferred
into the following equations:
\begin{align}
(\alpha+1) f &=\frac{\mu}{2},
\label{04eqn:eqn-Sa(1)} \\
(A_n-J-\alpha I)g &=-\frac{\mu}{2}\,{\bf 1},
\label{04eqn:eqn-Sa(2)} \\
f^2+\langle g,g\rangle &=1,
\label{04eqn:eqn-Sa(3)} \\
f+\langle {\bf 1},g\rangle &=0.
\label{04eqn:eqn-Sa(4)}
\end{align}
Let $\tilde{\alpha}_n$ be the minimal $\alpha\in\mathbb{R}$ appearing
in the solutions $(f,g,\alpha,\mu)
\in\mathbb{R}\times\mathbb{R}^n\times \mathbb{R}\times\mathbb{R}$
to the system of 
equations \eqref{04eqn:eqn-Sa(1)}--\eqref{04eqn:eqn-Sa(4)}.
Then, $\max\lambda(\mathcal{S})=-\tilde{\alpha}_n-2$ and 
\eqref{04eqn:QEC formula in general} becomes
\[
\mathrm{QEC}(K_1+P_n)=-\Tilde{\alpha}_n-2,
\qquad n\ge1.
\]
For $n=1,2$ the value is already known 
without having to recalculate it.
In fact, $K_1+P_1=K_2$ and $K_1+P_2=K_3$ are
the complete graphs of which the QEC are $-1$ \cite{Obata-Zakiyyah2018}.
Therefore, we have
$\mathrm{QEC}(K_1+P_1)=\mathrm{QEC}(K_1+P_2)=-1$ and
$\Tilde{\alpha}_1=\Tilde{\alpha}_2=-1$.

Hereafter we focus on the case of $n\ge3$.
It is known \cite{Baskoro-Obata2021} that
$\mathrm{QEC}(G)\ge -1$ for any graph $G$
and equality holds only for a complete graph $G=K_n$ with $n\ge2$.
Therefore, for $n\ge3$ we have $\mathrm{QEC}(K_1+P_n)>-1$
and $\tilde{\alpha}_n<-1$.
Thus, to obtain $\mathrm{QEC}(K_1+P_n)$ we need only to
find the solutions to
equations \eqref{04eqn:eqn-Sa(1)}--\eqref{04eqn:eqn-Sa(4)}
with $\alpha<-1$.
Under this condition, from \eqref{04eqn:eqn-Sa(1)} we obtain
\begin{equation}\label{04eqn:f=}
f=\frac{1}{\alpha+1}\,\frac{\mu}{2}\,.
\end{equation}
Since $Jg=\langle {\bf 1}, g\rangle{\bf 1}=-f{\bf 1}$ by
\eqref{04eqn:eqn-Sa(4)}, with the help of
\eqref{04eqn:f=} we see that \eqref{04eqn:eqn-Sa(2)} becomes
\begin{equation}\label{04eqn:(A-a)g=}
(A_n-\alpha I)g=-\frac{\alpha+2}{\alpha+1}\,\frac{\mu}{2}\,{\bf 1}.
\end{equation}
Thus, our task is to solve 
the system of equations \eqref{04eqn:f=}, \eqref{04eqn:(A-a)g=},
\eqref{04eqn:eqn-Sa(3)} and \eqref{04eqn:eqn-Sa(4)},
and to determine the minimal $\alpha\in(-\infty,-1)$
appearing in the solutions, which is $\Tilde{\alpha}_n$.
We solve this task by dividing it into two cases.
Let $\mathrm{ev}(A_n)$ denote the set of eigenvalues of $A_n$.
\par\noindent
\begin{description}
\item[(Case I)] Find the solutions with
$\alpha\in(-\infty,-1)\backslash \mathrm{ev}(A_n)$
and determine the minimal $\alpha$ appearing therein,
which is denoted by $\sigma_n$.
\item[(Case II)] Find the solutions with
$\alpha\in(-\infty,-1)\cap \mathrm{ev}(A_n)$
and determine the minimal $\alpha$ appearing therein,
which is denoted by $\tau_n$.
\end{description}

Summing up, we arrive at the preliminary formula for
$\mathrm{QEC}(K_1+P_n)$.

\begin{proposition}\label{03prop:preliminary formula}
For $n\ge3$ let $\Tilde{\alpha}_n$ be the minimal $\alpha\in\mathbb{R}$
appearing in the solutions to
the system of equations \eqref{04eqn:f=}, \eqref{04eqn:(A-a)g=},
\eqref{04eqn:eqn-Sa(3)} and \eqref{04eqn:eqn-Sa(4)}.
Then we have
\begin{equation}\label{04eqn:Tilde alpha by sigma and tau}
\tilde{\alpha}_n=\min\{\sigma_n,\tau_n\},
\qquad n\ge3,
\end{equation}
where $\sigma_n$ and $\tau_n$ are defined in (Case I) and
(Case II), respectively.
Moreover, we have
\begin{equation}\label{03eqn:main formula for QEC(K1+Pn)}
\mathrm{QEC}(K_1+P_n)=-\Tilde{\alpha}_n-2,
\qquad n\ge1,
\end{equation}
where $\Tilde{\alpha}_1=\Tilde{\alpha}_2=-1$.
\end{proposition}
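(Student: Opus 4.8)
The plan is to treat the two displayed identities as the culmination of the reduction already performed above, so that the proof is mostly a matter of assembling the pieces, checking that no solutions are lost, and verifying that the relevant extremum is attained. First I would dispose of the small cases: since $K_1+P_1=K_2$ and $K_1+P_2=K_3$ are complete, their QEC equals $-1$, and the substitution $\lambda=-\alpha-2$ forces $\tilde\alpha_1=\tilde\alpha_2=-1$; this settles \eqref{03eqn:main formula for QEC(K1+Pn)} in those cases and lets me restrict attention to $n\ge3$.

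For $n\ge3$ I would first justify $\mathrm{QEC}(K_1+P_n)=\max\lambda(\mathcal S)$ and its reformulation via $\alpha$. The constraint set $\{\,(f,g)\,;\,f^2+\langle g,g\rangle=1,\ f+\langle\mathbf{1},g\rangle=0\,\}$ is the intersection of the unit sphere with a hyperplane, hence compact, so $\psi$ attains its maximum there, and by Lagrange's theorem the maximizer lies in $\mathcal S$. The key algebraic identity $\psi(f,g)=\lambda$ on $\mathcal S$ comes from the stationarity condition $2D(f,g)^{\top}=2\lambda(f,g)^{\top}+\mu(1,\mathbf{1})^{\top}$: pairing with $(f,g)^{\top}$ and using $f^2+\langle g,g\rangle=1$ together with $f+\langle\mathbf{1},g\rangle=0$ annihilates the multiplier term and yields $\psi=\lambda$. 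Since $\alpha=-\lambda-2$ is an order-reversing bijection, maximizing $\lambda$ over $\mathcal S$ is the same as minimizing $\alpha$, whence $\mathrm{QEC}(K_1+P_n)=-\tilde\alpha_n-2$.

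It then remains to prove $\tilde\alpha_n=\min\{\sigma_n,\tau_n\}$. Here I would invoke the cited bound $\mathrm{QEC}(G)>-1$ for every non-complete connected graph \cite{Baskoro-Obata2021}: as $K_1+P_n$ with $n\ge3$ is not complete, the maximizing $\lambda$ exceeds $-1$, so $\tilde\alpha_n<-1$. On the range $\alpha<-1$ we have $\alpha+1\ne0$, so \eqref{04eqn:eqn-Sa(1)} is equivalent to \eqref{04eqn:f=}, and substituting $Jg=-f\mathbf{1}$ from \eqref{04eqn:eqn-Sa(4)} turns \eqref{04eqn:eqn-Sa(2)} into \eqref{04eqn:(A-a)g=}; these manipulations are reversible, so the solution sets of the original system \eqref{04eqn:eqn-Sa(1)}--\eqref{04eqn:eqn-Sa(4)} and of the reduced system agree on $\alpha<-1$, and in particular both contain the minimizer. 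Partitioning this range as $(-\infty,-1)=\bigl((-\infty,-1)\setminus\mathrm{ev}(A_n)\bigr)\cup\bigl((-\infty,-1)\cap\mathrm{ev}(A_n)\bigr)$ and recalling that $\sigma_n$ and $\tau_n$ are by definition the minimal $\alpha$ occurring in each piece, the overall minimum is $\min\{\sigma_n,\tau_n\}$, which is \eqref{04eqn:Tilde alpha by sigma and tau}.

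The step requiring the most care is the \emph{attainment} of the minimum: I must be sure that $\tilde\alpha_n$ is a genuine minimum rather than merely an infimum, and that the reduced system actually possesses a solution at $\alpha=\tilde\alpha_n$. The compactness-plus-Lagrange argument already guarantees that the global minimizing $\alpha$ is realized by some stationary point, which secures attainment of $\tilde\alpha_n$ itself; however, checking that each of $\sigma_n$ and $\tau_n$ is separately well-defined and attained, and computing them explicitly, is precisely the content of the analysis of (Case I) and (Case II), and I would defer it to the later sections where the polynomial $\phi_n(x)$ and its factorization through $U^{\mathrm{e}}_n$ enter. In short, the present statement is essentially a bookkeeping reduction whose substance is the compactness argument together with the order-reversing change of variable, the genuine analytic work being postponed.
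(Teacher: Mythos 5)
Your proof is correct and takes essentially the same route as the paper: the small cases $n=1,2$ via complete graphs, the compactness-plus-Lagrange justification of $\mathrm{QEC}(K_1+P_n)=\max\lambda(\mathcal{S})$ with the identity $\psi=\lambda$ at stationary points, the cited bound $\mathrm{QEC}(G)>-1$ for non-complete graphs forcing $\tilde\alpha_n<-1$ so that the system can be reduced on $\alpha<-1$, and the partition of $(-\infty,-1)$ by $\mathrm{ev}(A_n)$ into Cases I and II. The additional care you take over attainment and over the reversibility of the reduction only makes explicit what the paper delegates to ``general theory.''
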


The eigenvalues of $A_n$ are well known
\cite{Bapat2010,Biggs1993,Brouwer-Haemers2012,
Cvetkovic-Rowlinson-Simic2010}.
In fact, from the relation
$\det(xI-A_n)=U_n(x/2)=\Tilde{U}_n(x)$ for $n\ge1$
we obtain the eigenvalues of $A_n$ as 
the zeroes of $\Tilde{U}_n(x)$ as follows:
\begin{equation}\label{04eqn:ev(A_n)}
\mathrm{ev}(A_n)
=\left\{\omega^{(n)}_k\,;\, 1\le k\le n \right\},
\qquad
\omega^{(n)}_k=2\cos\frac{k\pi}{n+1},
\qquad n\ge1.
\end{equation}
Note that $\omega^{(n)}_k$ is the $k$th largest eigenvalue of $A_n$.

\subsection{Case I: Calculating $\sigma_n$}

We will find the solutions $(f,g,\alpha,\mu)
\in \mathbb{R}\times\mathbb{R}^n\times\mathbb{R}\times\mathbb{R}$
to equations
\eqref{04eqn:f=}, \eqref{04eqn:(A-a)g=},
\eqref{04eqn:eqn-Sa(3)} and \eqref{04eqn:eqn-Sa(4)}
with $\alpha\in(-\infty,-1)\backslash \mathrm{ev}(A_n)$.
Noting that $A_n-\alpha I$ can be inverted 
in \eqref{04eqn:(A-a)g=} and 
that $\mu=0$ does not appear in the solutions,
by simple algebra we see that \eqref{04eqn:eqn-Sa(4)} becomes
\begin{equation}\label{04eqn:(A-alpha)(2)}
(\alpha+2)\langle{\bf 1},(A_n-\alpha I)^{-1}{\bf 1}\rangle-1=0.
\end{equation}

\begin{lemma}\label{04lem:Key identity}
Let $n\ge1$.
For $\alpha\not\in \mathrm{ev}(A_n)\cup\{\pm2\}$ we have
\begin{equation}\label{04eqn:Key identity}
\langle{\bf 1},(A_n-\alpha I)^{-1}{\bf 1}\rangle
=\frac{1}{(2-\alpha)^2}
 \left\{n(2-\alpha)+2
 -2\left(\frac{\Tilde{U}_{n-1}(\alpha)}{\Tilde{U}_n(\alpha)}
 +\frac{1}{\Tilde{U}_n(\alpha)}\right)
\right\}.
\end{equation}
\end{lemma}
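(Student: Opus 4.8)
The plan is to compute the scalar $\langle \mathbf{1}, (A_n-\alpha I)^{-1}\mathbf{1}\rangle$ by solving the linear system $(A_n-\alpha I)x=\mathbf{1}$ and then evaluating $\langle \mathbf{1},x\rangle=\sum_{i=1}^n x_i$; the hypothesis $\alpha\notin\mathrm{ev}(A_n)$ guarantees that $A_n-\alpha I$ is invertible, so $x$ is well defined. Since $A_n$ is the tridiagonal adjacency matrix of $P_n$, the $i$-th row of the system reads $x_{i-1}-\alpha x_i+x_{i+1}=1$ for $2\le i\le n-1$, together with the two boundary rows $-\alpha x_1+x_2=1$ and $x_{n-1}-\alpha x_n=1$. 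Introducing phantom values $x_0=x_{n+1}=0$, I would record the whole system as a single inhomogeneous three-term recurrence $x_{i-1}-\alpha x_i+x_{i+1}=1$ valid for all $1\le i\le n$, subject to these two boundary conditions.

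Next I would solve this recurrence explicitly. A constant particular solution $x_i\equiv(2-\alpha)^{-1}$ works (here the assumption $\alpha\neq2$ enters), while the homogeneous recurrence $y_{i-1}-\alpha y_i+y_{i+1}=0$ is exactly the Chebyshev recurrence, so $\{\tilde{U}_i(\alpha)\}_i$ and $\{\tilde{U}_{i-1}(\alpha)\}_i$ form a basis of its solution space (their independence follows from $\tilde{U}_0\tilde{U}_0-\tilde{U}_1\tilde{U}_{-1}=1$). Writing $x_i=(2-\alpha)^{-1}+A\tilde{U}_i(\alpha)+B\tilde{U}_{i-1}(\alpha)$ and imposing $x_0=0$ and $x_{n+1}=0$, the two boundary conditions determine $A=-(2-\alpha)^{-1}$ and $B=(\tilde{U}_{n+1}(\alpha)-1)/((2-\alpha)\tilde{U}_n(\alpha))$, where the inversion again uses $\tilde{U}_n(\alpha)\neq0$, i.e.\ $\alpha\notin\mathrm{ev}(A_n)$.

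To finish I would sum $x_i$ over $1\le i\le n$. The required partial sums of Chebyshev polynomials follow from the telescoping identity $(2-\alpha)\sum_{k=0}^m\tilde{U}_k(\alpha)=1+\tilde{U}_m(\alpha)-\tilde{U}_{m+1}(\alpha)$, obtained by rewriting $(2-\alpha)\tilde{U}_k=(\tilde{U}_k-\tilde{U}_{k-1})-(\tilde{U}_{k+1}-\tilde{U}_k)$ via the recurrence and telescoping. Substituting the two relevant partial sums into $\sum_i x_i$ and clearing the common factor $(2-\alpha)^{-2}$ then yields the claimed expression after simplification.

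The main obstacle is this last algebraic simplification: the raw expression contains a product $(\tilde{U}_{n+1}-1)(1+\tilde{U}_{n-1}-\tilde{U}_n)$ whose expansion does not visibly collapse. The key is to invoke the Tur\'an-type identity $\tilde{U}_n^2-\tilde{U}_{n+1}\tilde{U}_{n-1}=1$ (the compressed form of the relation $U_n^2-U_{n+1}U_{n-1}=1$ already used in the proof of Theorem \ref{02thm:factorization of U}) to eliminate the cross term $\tilde{U}_{n+1}\tilde{U}_{n-1}$; after this substitution and one application of $\tilde{U}_{n+1}=\alpha\tilde{U}_n-\tilde{U}_{n-1}$, the terms not matching the target cancel in pairs, leaving exactly $n(2-\alpha)+2-2\tilde{U}_{n-1}(\alpha)/\tilde{U}_n(\alpha)-2/\tilde{U}_n(\alpha)$ inside the braces.
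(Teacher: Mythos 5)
Your proof is correct, and while it shares the paper's skeleton --- reduce $\langle\mathbf{1},(A_n-\alpha I)^{-1}\mathbf{1}\rangle$ to the inhomogeneous recurrence $x_{i-1}-\alpha x_i+x_{i+1}=1$ with phantom boundary values $x_0=x_{n+1}=0$, solve, and sum --- the execution in the middle is genuinely different. The paper writes the unique solution in closed form using the characteristic roots $\xi,\eta$ (with $\xi+\eta=\alpha$, $\xi\eta=1$), namely $g_k=\frac{1}{2-\alpha}\bigl(1-\frac{\xi^k}{1+\xi^{n+1}}-\frac{\eta^k}{1+\eta^{n+1}}\bigr)$, sums the resulting geometric series, and only at the end converts to Chebyshev form via $(\xi^{n+1}-\eta^{n+1})/(\xi-\eta)=\Tilde{U}_n(\alpha)$; this is exactly where the hypothesis $\alpha\neq\pm2$ enters, since the closed form needs $\xi\neq\eta$. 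You instead take $\{\Tilde{U}_i(\alpha),\Tilde{U}_{i-1}(\alpha)\}$ as a basis of the homogeneous solutions (correctly justified by the Casoratian $\Tilde{U}_0^2-\Tilde{U}_1\Tilde{U}_{-1}=1$, which is nonzero for every $\alpha$), fix the coefficients $A,B$ from the boundary conditions, sum by the telescoping identity, and close with the Tur\'an identity $\Tilde{U}_n^2-\Tilde{U}_{n+1}\Tilde{U}_{n-1}=1$; I verified the final simplification, and after eliminating $\Tilde{U}_{n+1}\Tilde{U}_{n-1}$ and applying $\Tilde{U}_{n+1}=\alpha\Tilde{U}_n-\Tilde{U}_{n-1}$ once, it does collapse to $n(2-\alpha)+2-2\Tilde{U}_{n-1}/\Tilde{U}_n-2/\Tilde{U}_n$ as you claim. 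What your route buys: it never leaves the ring of polynomial identities, and it uses only $\alpha\neq2$ (for the constant particular solution) and $\Tilde{U}_n(\alpha)\neq0$ (invertibility), so it actually establishes the identity at $\alpha=-2$ as well --- the exclusion of $-2$ in the lemma's hypothesis is an artifact of the paper's parametrization, not of the statement (for instance, at $n=2$, $\alpha=-2$ both sides equal $2/3$). What the paper's route buys is a compact, symmetric closed form for the entries $g_k$ and a summation that is a purely mechanical geometric-series computation, with none of the Tur\'an-identity bookkeeping your final step requires.
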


\begin{proof}
We set $g=(A_n-\alpha I)^{-1}{\bf 1}$ and $g=[g_1, \dots, g_n]^T$.
Then $g$ satisfies $(A_n-\alpha I)g={\bf 1}$,
which is equivalent to the three-term recurrence relation: 
\begin{equation}\label{04eqn:main 3-term recurrence relation}
g_{k+2}-\alpha g_{k+1}+g_k=1,
\qquad 0\le k\le n-1,
\end{equation}
with boundary condition $g_0=g_{n+1}=0$.
The unique solution is explicitly given by 
\[
g_k=\frac{1}{2-\alpha}\bigg(
 1-\frac{\xi^k}{1+\xi^{n+1}}
  -\frac{\eta^k}{1+\eta^{n+1}}\bigg),
\qquad
0\le k\le n+1,
\]
where $\xi,\eta\in\mathbb{C}$ be the characteristic roots of
\eqref{04eqn:main 3-term recurrence relation},
satisfying $\xi+\eta=\alpha$ and $\xi\eta=1$. 
Noting that $\xi\neq\eta$ by the assumption $\alpha\neq\pm2$,
by direct computation we come to
\begin{align}
\langle{\bf 1},(A_n-\alpha I)^{-1}{\bf 1}\rangle
&=\langle{\bf 1},g\rangle
=\sum_{k=1}^n g_k
\nonumber \\
&=\frac{1}{(2-\alpha)^2}
 \left\{n(2-\alpha)+2
 -2\left(\frac{\xi^n-\eta^n}{\xi^{n+1}-\eta^{n+1}}
 +\frac{\xi-\eta}{\xi^{n+1}-\eta^{n+1}}\right)
\right\}.
\label{04eqn:in proof Prop4.1(1)}
\end{align}
On the other hand, 
by comparing the recurrence relations we obtain
\begin{equation}\label{04eqn:relation to U_n}
\frac{\xi^{n+1}-\eta^{n+1}}{\xi-\eta}
=\Tilde{U}_n(\alpha),
\qquad n\ge0.
\end{equation}
Then, \eqref{04eqn:Key identity} follows from
\eqref{04eqn:in proof Prop4.1(1)} and \eqref{04eqn:relation to U_n}.
\end{proof}

With the help of Lemma \ref{04lem:Key identity} and by simple algebra
we see that \eqref{04eqn:(A-alpha)(2)} becomes
\begin{equation}\label{04eqn:Phi_n(alpha)=0}
((n+1)\alpha^2-6\alpha-4n)\Tilde{U}_n(\alpha)
+2(\alpha+2)(\Tilde{U}_{n-1}(\alpha)+1)
=0.
\end{equation}
Setting $\alpha=2x$ in the left-hand side,
we define the key polynomial $\phi_n(x)$ by
\begin{equation}\label{03eqn:def of phi(x)}
\phi_n(x)
=((n+1)x^2-3x-n)U_n(x)+(x+1)(U_{n-1}(x)+1).
\end{equation}
Obviously, $\phi_n(x)$ is a polynomial of
order $n+2$ and has integer coefficients.

\begin{lemma}\label{03lem:preparatory formula for sigma_n}
For $n\ge3$ we have
\begin{equation}\label{04eqn:sigma_n}
\sigma_n
=\min\{\alpha\in(-\infty,-1)\backslash \mathrm{ev}(A_n)\,;\,
\phi_n(\alpha/2)=0\}.
\end{equation}
\end{lemma}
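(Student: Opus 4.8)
The plan is to prove the set equality
\[
\{\alpha\in(-\infty,-1)\setminus\mathrm{ev}(A_n)\,;\,\text{a solution }(f,g,\alpha,\mu)\text{ exists}\}
=\{\alpha\in(-\infty,-1)\setminus\mathrm{ev}(A_n)\,;\,\phi_n(\alpha/2)=0\},
\]
after which $\sigma_n$ is just the minimum of this (finite) set. First I would use that on this range $A_n-\alpha I$ is invertible, so \eqref{04eqn:(A-a)g=} determines $g$ uniquely in terms of $\mu$, namely
\[
g=-\frac{\alpha+2}{\alpha+1}\,\frac{\mu}{2}\,(A_n-\alpha I)^{-1}{\bf 1}.
\]
Substituting this together with \eqref{04eqn:f=} into \eqref{04eqn:eqn-Sa(4)} and cancelling the common factor $\mu/(2(\alpha+1))$ gives exactly \eqref{04eqn:(A-alpha)(2)}; the cancellation is legitimate since $\alpha<-1$ forces $\alpha+1\neq0$, and since $\mu=0$ is impossible (it would force $f=g=0$, violating \eqref{04eqn:eqn-Sa(3)}). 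Feeding \eqref{04eqn:Key identity} into \eqref{04eqn:(A-alpha)(2)} and clearing denominators yields \eqref{04eqn:Phi_n(alpha)=0}, which under $\alpha=2x$ is precisely $4\phi_n(\alpha/2)=0$. This chain is essentially the derivation already recorded before the lemma, and it establishes the \emph{necessity} direction (every solution forces $\phi_n(\alpha/2)=0$).

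The hard part will be the \emph{converse}: confirming that every root $\alpha$ of $\phi_n(\alpha/2)$ in this range genuinely arises from a solution, so that clearing denominators has introduced no spurious roots. Given such an $\alpha$, I would define $g$ by the displayed formula and $f$ by \eqref{04eqn:f=}; then \eqref{04eqn:(A-a)g=} holds by construction, and \eqref{04eqn:eqn-Sa(4)} holds because $\phi_n(\alpha/2)=0$ is equivalent to \eqref{04eqn:(A-alpha)(2)} via Lemma \ref{04lem:Key identity}. The last constraint \eqref{04eqn:eqn-Sa(3)} is merely a normalization: since $f$ and $g$ are both proportional to $\mu$, one has $f^2+\langle g,g\rangle=C\mu^2$ with $C>0$ (indeed $f\neq0$ whenever $\mu\neq0$), so $\mu$ can be rescaled to make $f^2+\langle g,g\rangle=1$. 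This completes the sufficiency direction and hence the set equality.

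Finally I would dispose of the edge cases so that invoking Lemma \ref{04lem:Key identity} (valid only for $\alpha\neq\pm2$) is harmless on the full range. The value $\alpha=2$ never occurs because $\alpha<-1$. The value $\alpha=-2$ does lie in the range, since the eigenvalues $2\cos(k\pi/(n+1))$ are all strictly greater than $-2$; but a direct evaluation gives $\phi_n(-1)=4U_n(-1)=4(-1)^n(n+1)\neq0$, and at $\alpha=-2$ the factor $(\alpha+2)/(\alpha+1)$ annihilates $g$, whence \eqref{04eqn:eqn-Sa(4)} forces $f=0$, contradicting \eqref{04eqn:eqn-Sa(3)}. Thus $\alpha=-2$ is excluded from both sides and does not disturb the equivalence. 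Since $\phi_n$ has degree $n+2$, the characterized set is finite, its minimum is attained, and it equals $\sigma_n$, which is \eqref{04eqn:sigma_n}.
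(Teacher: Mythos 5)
Your proof is correct and takes essentially the same route as the paper: necessity is read off from the derivation recorded before the lemma, and sufficiency is obtained by constructing $g$ and $f$ from a root $\alpha$ via \eqref{04eqn:(A-a)g=} and \eqref{04eqn:f=} and then fixing $\mu$ through the normalization \eqref{04eqn:eqn-Sa(3)}. Your explicit disposal of the $\alpha=-2$ edge case, where Lemma \ref{04lem:Key identity} is not applicable, is a point the paper's proof leaves implicit, and you handle it correctly.
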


\begin{proof}
Any $\alpha\in(-\infty,-1)\backslash \mathrm{ev}(A_n)$ satisfying
\eqref{04eqn:Phi_n(alpha)=0} gives rise to a solution to
the equations \eqref{04eqn:f=}, \eqref{04eqn:(A-a)g=},
\eqref{04eqn:eqn-Sa(3)} and \eqref{04eqn:eqn-Sa(4)}.
In fact, $f$ and $g$ are 
respectively determined by 
\eqref{04eqn:f=} and \eqref{04eqn:(A-a)g=},
and then $\mu$ is specified by \eqref{04eqn:eqn-Sa(3)}.
Therefore, $\sigma_n$ is the minimal 
$\alpha\in(-\infty,-1)\backslash \mathrm{ev}(A_n)$ satisfying
\eqref{04eqn:Phi_n(alpha)=0}.
Then \eqref{04eqn:sigma_n} is immediate 
by observing that
\eqref{04eqn:Phi_n(alpha)=0} is equivalent to
$\phi_n(\alpha/2)=0$ by \eqref{03eqn:def of phi(x)}.
\end{proof}

For the evaluation of \eqref{04eqn:sigma_n} we need
the factorization of $\phi_n(x)$ by means of the
partial Chebyshev polynomials.
The argument will be completed in the next section.

\subsection{Case II: Calculating $\tau_n$}

\begin{lemma}\label{03lem:eigenvectors of Pn}
Let $n\ge1$.
For $1\le k\le n$ let $\omega^{(n)}_k$ be the eigenvalue of $A_n$ 
defined as in \eqref{04eqn:ev(A_n)}.
Then the eigenspace associated to $\omega^{(n)}_k$
is orthogonal to ${\bf 1}$ if and only if $k$ is even.
\end{lemma}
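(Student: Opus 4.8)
The plan is to compute the eigenvectors of $A_n$ explicitly and test their inner product with $\1$. Since the eigenvalues $\omega^{(n)}_k=2\cos k\pi/(n+1)$ are simple (the eigenvalues of the path are distinct), each eigenspace is one-dimensional, so it suffices to produce a single eigenvector $v^{(k)}$ for each $k$ and decide when $\langle\1,v^{(k)}\rangle=0$. The standard eigenvector for the path adjacency matrix is
\begin{equation}\label{eq:pathvec}
v^{(k)}=\left[\sin\frac{k\pi}{n+1},\,\sin\frac{2k\pi}{n+1},\,\dots,\,\sin\frac{nk\pi}{n+1}\right]^{T},
\end{equation}
and I would first confirm that $A_n v^{(k)}=\omega^{(n)}_k v^{(k)}$ by checking the three-term recurrence $\sin\frac{(j-1)k\pi}{n+1}+\sin\frac{(j+1)k\pi}{n+1}=2\cos\frac{k\pi}{n+1}\sin\frac{jk\pi}{n+1}$ together with the correct boundary behaviour at $j=1$ and $j=n$ (the entries at $j=0$ and $j=n+1$ vanish automatically since $\sin 0=\sin k\pi=0$).

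The crux is then evaluating the sum $\langle\1,v^{(k)}\rangle=\sum_{j=1}^{n}\sin\frac{jk\pi}{n+1}$. I would sum this geometric-type series in closed form. Writing the sine as the imaginary part of $e^{ijk\pi/(n+1)}$ and summing the geometric progression, one finds
\begin{equation}\label{eq:sumsin}
\sum_{j=1}^{n}\sin\frac{jk\pi}{n+1}
=\frac{\sin\dfrac{nk\pi}{2(n+1)}\,\sin\dfrac{(n+1)k\pi}{2(n+1)}}{\sin\dfrac{k\pi}{2(n+1)}}
=\frac{\sin\dfrac{nk\pi}{2(n+1)}\,\sin\dfrac{k\pi}{2}}{\sin\dfrac{k\pi}{2(n+1)}}.
\end{equation}
The decisive factor is $\sin(k\pi/2)$ in the numerator: it vanishes exactly when $k$ is even and equals $\pm1$ when $k$ is odd. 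For odd $k$ the denominator $\sin\frac{k\pi}{2(n+1)}$ is nonzero (since $0<k<2(n+1)$) and the factor $\sin\frac{nk\pi}{2(n+1)}$ is also nonzero for the same range, so the whole sum is nonzero. This gives the claimed dichotomy: $\langle\1,v^{(k)}\rangle=0$ if and only if $k$ is even.

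The step I expect to be the main obstacle is the clean trigonometric reduction in \eqref{eq:sumsin} and the careful verification that the odd-$k$ value is genuinely nonzero rather than an indeterminate $0/0$; in particular one must confirm $\sin\frac{nk\pi}{2(n+1)}\neq0$, which holds because $nk$ is not an even multiple of $n+1$ in the relevant range. An alternative, perhaps cleaner, route is to invoke the parity symmetry of the path: the graph $P_n$ has an involution $j\mapsto n+1-j$, and under it the eigenvector $v^{(k)}$ is symmetric for odd $k$ and antisymmetric for even $k$ (since $\sin\frac{(n+1-j)k\pi}{n+1}=(-1)^{k+1}\sin\frac{jk\pi}{n+1}$). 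An antisymmetric vector automatically has zero sum against the symmetric vector $\1$, immediately giving $\langle\1,v^{(k)}\rangle=0$ for even $k$; for odd $k$ one still needs \eqref{eq:sumsin} to rule out an accidental vanishing, so I would use the symmetry argument for the ``if'' direction and the explicit sum for the ``only if'' direction.
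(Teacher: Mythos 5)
Your proposal is correct and follows essentially the same route as the paper: the explicit eigenvector with entries $\sin\frac{jk\pi}{n+1}$ (simple eigenvalues, hence one-dimensional eigenspaces), the closed-form sum $\sum_{j=1}^{n}\sin\frac{jk\pi}{n+1}=\sin\frac{k\pi}{2}\sin\frac{nk\pi}{2(n+1)}\big(\sin\frac{k\pi}{2(n+1)}\big)^{-1}$, and the observation that the non-sine factors cannot vanish for $1\le k\le n$, so everything hinges on the parity of $k$ through $\sin\frac{k\pi}{2}$. Your supplementary symmetry (skew-palindromic) remark is also consistent with the paper, which notes it in passing with a reference.
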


\begin{proof}
It is easily verified that any eigenvector associated to
an eigenvalue $\alpha=\omega^{(n)}_k$ is
a constant multiple of $g=[g_1,\dots,g_n]^T$ given by
\[
g_l=\sin\frac{lk\pi}{n+1}\,,
\qquad 1\le l\le n\,.
\]
Moreover, with the help of an elementary formula
of trigonometric series we obtain
\[
\langle{\bf 1},g\rangle
=\sum_{l=1}^n \sin\frac{lk\pi}{n+1}
=\sin\dfrac{k\pi}{2}\sin\dfrac{nk\pi}{2(n+1)}
 \bigg(\sin\dfrac{k\pi}{2(n+1)}\bigg)^{-1}.
\]
Since $nk/(2(n+1))$ is not an integer for $1\le k\le n$, 
we see that
$\langle{\bf 1},g\rangle=0$ if and only if $k$ is even.
(In that case $g$ is skew-palindromic, see \cite{Doob-Haemers2002}.)
\end{proof}

Now we find the solutions $(f,g,\alpha,\mu)
\in \mathbb{R}\times\mathbb{R}^n\times\mathbb{R}\times\mathbb{R}$
to equations \eqref{04eqn:f=}, \eqref{04eqn:(A-a)g=},
\eqref{04eqn:eqn-Sa(3)} and \eqref{04eqn:eqn-Sa(4)}
with $\alpha\in(-\infty,-1)\cap \mathrm{ev}(A_n)$
by dividing them into two cases.

{\bfseries (Case II-1)} $\mu=0$.
Then $f=0$ by \eqref{04eqn:f=}, and \eqref{04eqn:(A-a)g=},
\eqref{04eqn:eqn-Sa(3)} and \eqref{04eqn:eqn-Sa(4)} become
\begin{equation}\label{03eqn:skew-palindromic}
A_ng=\alpha g,
\qquad \langle g,g\rangle=1,
\qquad \langle {\bf 1},g\rangle=0,
\end{equation}
respectively. 
It then follows from Lemma \ref{03lem:eigenvectors of Pn} that
$\alpha=\omega^{(n)}_k$ appears in the solutions
to \eqref{04eqn:f=}, \eqref{04eqn:(A-a)g=},
\eqref{04eqn:eqn-Sa(3)} and \eqref{04eqn:eqn-Sa(4)}
with $\alpha\in(-\infty,-1)\cap \mathrm{ev}(A_n)$
if and only if $\omega^{(n)}_k<-1$ and $k$ is even.
Note that for $1\le k\le n$ we have $\omega^{(n)}_k<-1$ 
if and only if $2(n+1)<3k$.

{\bfseries (Case II-2)} $\mu\neq0$.
Since the right-hand side of \eqref{04eqn:(A-a)g=} is not zero
in this case,
the equation has a solution if and only if
\begin{equation}\label{03eqn:ranks}
\mathrm{rank}(A_n-\alpha I)
=\mathrm{rank} [A_n-\alpha I, {\bf 1}],
\end{equation}
where $[A_n-\alpha I, {\bf 1}]$ is an $n\times (n+1)$ matrix 
obtained by adding ${\bf 1}$ as the $(n+1)$-th column vector.
We will show that \eqref{03eqn:ranks} fails for 
$\alpha=\omega_k^{(n)}$ with an odd $k$.

For $1\le i\le n$ let $v_i$ be the $i$th row vector of
$[A_n-\alpha I, {\bf 1}]$, that is,
\[
v_i=[(A_n-\alpha I)_{i1},\dots, (A_n-\alpha I)_{in},1].
\]
We first show that $v_1,v_2,\dots,v_n$ are linearly independent.
In fact, suppose that
\[
\sum_{i=1}^n \xi_i v_i=0,
\qquad \xi_1,\dots,\xi_n\in\mathbb{R}.
\]
Comparing the $j$th components of both sides,
we obtain
\begin{equation}\label{03eqn:in proof 8}
\sum_{i=1}^n \xi_i (A_n-\alpha I)_{ij}=0
\quad \text{for $1\le j\le n$},
\qquad
\sum_{i=1}^n \xi_i =0
\quad \text{for $j=n+1$}.
\end{equation}
We set $\xi=[\xi_1,\dots,\xi_n]^T$.
Since $A_n-\alpha I$ is a symmetric matrix, 
\eqref{03eqn:in proof 8} is equivalent to
\begin{equation}\label{03eqn:in proof 10}
(A_n-\alpha I)\xi=0,
\qquad
\langle {\bf 1},\xi\rangle=0.
\end{equation}
If $\alpha=\omega^{(n)}_k$ with an odd $k$,
then $\xi=0$ is the only vector that satisfies 
\eqref{03eqn:in proof 10} by Lemma \ref{03lem:eigenvectors of Pn}.
Therefore, $v_1,v_2,\dots,v_n$ are linearly independent.
Since they are the row vectors of the 
matrix $[A_n-\alpha I, {\bf 1}]$, we have 
$\mathrm{rank}[A_n-\alpha I, {\bf 1}]=n$.
On the other hand, since $\alpha$ is an eigenvalue of $A_n$, we have
$\mathrm{rank}(A_n I-\alpha)\le n-1$.
Thus, \eqref{03eqn:ranks} fails
and \eqref{04eqn:(A-a)g=} has no solution.
Consequently, $\alpha=\omega^{(n)}_k$ with an odd $k$
never appears in the solutions,
while $\alpha=\omega^{(n)}_k$ with
an even $k$ may appear therein.

Combining the results in (Case II-1) and (Case II-2),
we conclude that
\begin{equation}\label{04eqn:tau_n}
\tau_n
=\min\bigg\{\omega^{(n)}_k=2\cos \frac{k\pi}{n+1}\,;\, 
1\le k\le n,\,\, \text{$k$ is even}\bigg\}\cap (-\infty,-1),
\end{equation}
of which the explicit value is given by
\begin{align}
\tau_{2n+1}&=\omega^{(2n+1)}_{2n}=2\cos\frac{2n}{2n+2}\pi, 
\qquad
\qquad n\ge3,
\label{04eqn:tau_2n+1}\\
\tau_{2n}&=\omega^{(2n)}_{2n}=2\cos\frac{2n}{2n+1}\pi,
\qquad n\ge2.
\label{04eqn:tau_2n}
\end{align}
Note that $\tau_3$ and $\tau_5$ are not determined
because the right-hand side of \eqref{04eqn:tau_n} is empty.

For $n\ge2$ let $\beta_n$ be the minimal zero of $U^{\mathrm{e}}_n(x)$.
In view of Theorem \ref{02thm:product expressions} we have
\begin{equation}\label{03eqn:beta_n}
\beta_{2n}=\cos\frac{2n}{2n+1}\pi,
\qquad
\beta_{2n+1}=\cos\frac{2n}{2n+2}\pi,
\qquad n\ge1.
\end{equation}
Then, comparing \eqref{04eqn:tau_2n+1}, \eqref{04eqn:tau_2n}
and \eqref{03eqn:beta_n}, we come to the following assertion.

\begin{lemma}\label{04lem:tau_n}
For any $n\ge3$, $n\neq 3,5$ we have
$\tau_n=2\beta_n$.
\end{lemma}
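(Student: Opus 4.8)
The plan is to prove the identity by comparing the closed forms that have already been obtained, organised by the parity of $n$, and to read off the excluded cases $n=3,5$ from the threshold $\omega^{(n)}_k<-1$. A conceptually cleaner route, which I would actually follow, is to express $2\beta_n$ as the smallest even-indexed eigenvalue of $A_n$ and to match it directly against the definition \eqref{04eqn:tau_n} of $\tau_n$, rather than to treat the even and odd index formulas separately.

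First I would recall from Theorem \ref{02thm:product expressions} that the zeroes of $U^{\mathrm{e}}_n(x)$ are exactly $\cos\frac{k\pi}{n+1}$ for even $k$ with $1\le k\le n$. Since the cosine is strictly decreasing on $[0,\pi]$, the minimal zero $\beta_n$ is attained at the largest even $k\le n$; call it $k_{\max}$, so that $k_{\max}=n$ for even $n$ and $k_{\max}=n-1$ for odd $n$. Hence, using \eqref{04eqn:ev(A_n)},
\begin{equation*}
2\beta_n=2\cos\frac{k_{\max}\pi}{n+1}=\omega^{(n)}_{k_{\max}}
=\min\{\omega^{(n)}_k\,;\,1\le k\le n,\ k\text{ even}\},
\end{equation*}
the last equality again by monotonicity of the cosine. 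On the other hand, $\tau_n$ in \eqref{04eqn:tau_n} is the minimum of this same set of even-indexed eigenvalues after intersecting with $(-\infty,-1)$. Since $\omega^{(n)}_{k_{\max}}$ is already the smallest member of that set, we obtain $\tau_n=2\beta_n$ as soon as $\omega^{(n)}_{k_{\max}}<-1$.

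The only point requiring care—and the place where the exclusions $n=3,5$ enter—is therefore the verification that $\omega^{(n)}_{k_{\max}}<-1$. This is the inequality $\cos\frac{k_{\max}\pi}{n+1}<-\tfrac12$, equivalently $3k_{\max}>2(n+1)$. For even $n=2m$ one has $k_{\max}=2m$, so the condition reads $6m>4m+2$, i.e. $m\ge2$, covering all even $n\ge4$; for odd $n=2m+1$ one has $k_{\max}=2m$, so the condition reads $6m>4m+4$, i.e. $m\ge3$, covering all odd $n\ge7$. The remaining odd values $n=3$ ($m=1$) and $n=5$ ($m=2$) are precisely those for which no even-indexed eigenvalue lies below $-1$, so the set in \eqref{04eqn:tau_n} is empty and $\tau_n$ is left undefined; these are exactly the cases removed from the statement. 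Finally, matching $2\beta_n$ against the explicit values in \eqref{04eqn:tau_2n+1} and \eqref{04eqn:tau_2n} through \eqref{03eqn:beta_n} confirms $\tau_n=2\beta_n$ in each surviving case, completing the argument.
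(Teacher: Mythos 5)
Your proposal is correct and takes essentially the same route as the paper: both identify the minimal zero $\beta_n$ of $U^{\mathrm{e}}_n(x)$ from the product formula in Theorem \ref{02thm:product expressions}, recognize $2\beta_n$ as the smallest even-indexed eigenvalue $\omega^{(n)}_k$ via monotonicity of the cosine, and use the threshold $\omega^{(n)}_k<-1\iff 2(n+1)<3k$ to see that the intersection with $(-\infty,-1)$ leaves this minimum unchanged for $n\ge4$ even and $n\ge7$ odd, while being empty exactly for $n=3,5$. Your uniform treatment merely repackages the paper's explicit parity-split formulas \eqref{04eqn:tau_2n+1}, \eqref{04eqn:tau_2n} and \eqref{03eqn:beta_n}; the mathematical content is identical.
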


\subsection{Main Results}

\begin{theorem}\label{04thm:main formula for fan 1}
For $n\ge1$ let $\alpha_n$ be the minimal zero of $\phi_n(x)$
defined in \eqref{03eqn:def of phi(x)}.
Then we have
\begin{equation}\label{04eqn:main formula for a fan graph}
\mathrm{QEC}(K_1+P_n)
=-2\alpha_n-2,
\qquad n\ge1.
\end{equation}
\end{theorem}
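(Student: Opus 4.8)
The plan is to reduce the theorem to the single identity $\tilde\alpha_n=2\alpha_n$ for $n\ge3$, after which \eqref{04eqn:main formula for a fan graph} is immediate from \eqref{03eqn:main formula for QEC(K1+Pn)}; the cases $n=1,2$ I would dispose of by direct computation. Recall from Proposition \ref{03prop:preliminary formula} that $\tilde\alpha_n=\min\{\sigma_n,\tau_n\}$, where by Lemma \ref{03lem:preparatory formula for sigma_n} the number $\sigma_n$ is (in the variable $\alpha=2x$) the smallest \emph{non}-eigenvalue zero of $\phi_n$ lying below $-1$, while by \eqref{04eqn:tau_n} and Lemma \ref{04lem:tau_n} the number $\tau_n=2\beta_n$ is the smallest even-indexed eigenvalue $\omega_k^{(n)}<-1$. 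So the whole task is to see that these two competing candidates together sweep out exactly the zeros of $\phi_n$ below $-1/2$, and that their minimum is the \emph{global} minimal zero.

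The key computational step is to locate the halved eigenvalues among the zeros of $\phi_n$. Substituting $x=\omega_k^{(n)}/2=\cos\frac{k\pi}{n+1}$ into \eqref{03eqn:def of phi(x)} and using $U_n(\cos\frac{k\pi}{n+1})=0$ collapses the first summand, leaving $\phi_n(x)=(x+1)\bigl(U_{n-1}(x)+1\bigr)$; a one-line trigonometric evaluation gives $U_{n-1}(\cos\frac{k\pi}{n+1})=(-1)^{k+1}$, so that
\[
\phi_n\!\left(\cos\frac{k\pi}{n+1}\right)
=\Bigl(\cos\frac{k\pi}{n+1}+1\Bigr)\bigl((-1)^{k+1}+1\bigr),
\]
which vanishes precisely when $k$ is even, the prefactor being strictly positive for $1\le k\le n$. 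Thus the even-indexed halved eigenvalues are zeros of $\phi_n$ while the odd-indexed ones are not---exactly the dichotomy underlying the split of $\tilde\alpha_n$ into the eigenvalue term $\tau_n$ and the non-eigenvalue term $\sigma_n$. This is also where Theorem \ref{02thm:product expressions} enters, since it identifies these even-indexed halved eigenvalues as precisely the zeros of $U^{\mathrm e}_n$, the factor promised by the factorization of $\phi_n$.

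With this dichotomy in hand I would argue as follows. Every zero of $\phi_n$ in $(-\infty,-1/2)$ is either a non-eigenvalue zero, hence at least $\sigma_n/2$, or (by the computation above) an \emph{even}-indexed halved eigenvalue, hence at least $\tau_n/2=\beta_n$; conversely both $\sigma_n/2$ and $\beta_n$ are themselves zeros of $\phi_n$ in that interval whenever they exist. Therefore $\tilde\alpha_n/2=\min\{\sigma_n,\tau_n\}/2$ is exactly the smallest zero of $\phi_n$ in $(-\infty,-1/2)$. It remains to upgrade ``smallest zero in $(-\infty,-1/2)$'' to ``smallest zero'' $\alpha_n$. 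Here I would invoke the a priori bound $\mathrm{QEC}(K_1+P_n)>-1$ for $n\ge3$ (with equality only for complete graphs), which forces $\tilde\alpha_n<-1$ and hence $\tilde\alpha_n/2<-1/2$; the global minimal zero $\alpha_n$ is then automatically strictly below $-1/2$, so it lies in the interval and the two notions of ``smallest zero'' coincide, giving $\tilde\alpha_n=2\alpha_n$.

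The main obstacle is precisely this last upgrade: a priori $\phi_n$ could possess a real zero to the left of $\tilde\alpha_n/2$ that the stationary-point analysis never records, which would make $\alpha_n<\tilde\alpha_n/2$ and break the formula. The bound $\mathrm{QEC}>-1$ rules this out only indirectly, so a self-contained argument would instead control the sign of $\phi_n$ on $(-\infty,\tilde\alpha_n/2)$; the cleanest route is the factorization $\phi_n=U^{\mathrm e}_n\cdot\psi_n$ of the next section, whose explicit cofactor $\psi_n$ pins down the remaining zeros and confirms that none lie below $\tilde\alpha_n/2$. For the degenerate indices $n=3,5$ the set in \eqref{04eqn:tau_n} is empty and $\tau_n$ is undefined; there $\tilde\alpha_n=\sigma_n$ and the minimal zero of $\phi_n$ is the non-eigenvalue zero $\sigma_n/2=\alpha_n$, so the conclusion persists. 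Finally, for $n=1,2$ I would simply record $\phi_1(x)=2(x-1)^2(2x+1)$ and $\phi_2(x)=3(x-1)^2(2x+1)^2$, each with minimal zero $\alpha_n=-1/2$, reproducing $\mathrm{QEC}(K_1+P_n)=-1$ in agreement with $\tilde\alpha_1=\tilde\alpha_2=-1$.
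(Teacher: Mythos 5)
Your proposal is correct, and it takes a genuinely different route from the paper's. The paper proves the theorem through the full machinery of Section \ref{sec:New Polynomials}: the factorization $\phi_n(x)=(x-1)\,U^{\mathrm{e}}_n(x)\,S_n(x)$ (Theorem \ref{04thm:factorization pf phi_n(x)}), the interlacing analysis of the zeroes of $S_n(x)$ (Theorems \ref{04thm:roots of S2n(x)} and \ref{06thm:roots of S2n+1(x)}), the comparison of the minimal zeroes $\beta_n$ of $U^{\mathrm{e}}_n(x)$ and $\gamma_n$ of $S_n(x)$ (Theorem \ref{03thm:comparison beta and gamma}), and then the identifications $\sigma_n=2\gamma_n$ (Lemma \ref{04lem:sigma_n}, which in particular requires checking via interlacing that $2\gamma_n$ is not an eigenvalue of $A_n$) and $\tau_n=2\beta_n$ (Lemma \ref{04lem:tau_n}), whence $\min\{\sigma_n,\tau_n\}=2\min\{\gamma_n,\beta_n\}=2\alpha_n$. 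You bypass all of this. Your one-line evaluation
\[
\phi_n\!\left(\cos\frac{k\pi}{n+1}\right)
=\left(\cos\frac{k\pi}{n+1}+1\right)\left((-1)^{k+1}+1\right),
\qquad 1\le k\le n,
\]
shows that the halved eigenvalues of $A_n$ which are zeroes of $\phi_n$ are exactly the even-indexed ones. Combined with Lemma \ref{03lem:preparatory formula for sigma_n} (any zero of $\phi_n$ below $-1/2$ whose double is not an eigenvalue is $\ge\sigma_n/2$) and the definition \eqref{04eqn:tau_n} of $\tau_n$ (any even-indexed halved eigenvalue below $-1/2$ is $\ge\tau_n/2$), and with the observation that $\sigma_n/2$ and $\tau_n/2$ are themselves zeroes of $\phi_n$ whenever defined, this gives that the minimal zero of $\phi_n$ in $(-\infty,-1/2)$ is exactly $\min\{\sigma_n,\tau_n\}/2=\tilde\alpha_n/2$; the a priori bound $\tilde\alpha_n<-1$ then places the global minimal zero $\alpha_n$ inside that interval, so $\tilde\alpha_n=2\alpha_n$ and Proposition \ref{03prop:preliminary formula} finishes the proof. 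Your treatment of the degenerate cases ($n=3,5$ where $\tau_n$ is undefined, and $n=1,2$ by the explicit forms of $\phi_1,\phi_2$) is also correct. What the paper's heavier route buys is precisely what your argument does not deliver: the explicit location of $\alpha_n$ ($\alpha_n=\beta_n$ for even $n$ and $\alpha_n=\gamma_n$ for odd $n$, Theorem \ref{04thm:alpha_n minimal zero of phi_n}), which is indispensable afterwards for the closed form, the two-sided estimate, and the monotonicity in Theorem \ref{04thm:main formula for fan 2} --- so the paper cannot avoid Section \ref{sec:New Polynomials} anyway, but for Theorem \ref{04thm:main formula for fan 1} alone your argument is shorter and self-contained.

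One correction to your own assessment: the ``main obstacle'' you describe in your final paragraph is not an obstacle at all. A zero of $\phi_n$ strictly to the left of $\tilde\alpha_n/2$ would in particular lie in $(-\infty,-1/2)$, so your dichotomy already forces it to be $\ge\sigma_n/2$ or $\ge\tau_n/2$, i.e.\ $\ge\tilde\alpha_n/2$, a contradiction; the point is that Lemma \ref{03lem:preparatory formula for sigma_n} and \eqref{04eqn:tau_n} bound \emph{all} zeroes of $\phi_n$ below $-1/2$, not merely those produced by the stationary-point analysis. Hence no appeal to the factorization of Section \ref{sec:New Polynomials} is needed to close your argument --- your third paragraph already does.
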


\begin{theorem}\label{04thm:main formula for fan 2}
For $n\ge1$ we have
\begin{gather}
\mathrm{QEC}(K_1+P_{2n})
=-4\sin^2\frac{\pi}{2(2n+1)}\,,
\label{04eqn:main formula for even n} \\
-4\sin^2\frac{\pi}{2(2n+2)}<\mathrm{QEC}(K_1+P_{2n+1})
< -4\sin^2\frac{\pi}{2(2n+3)}.
\label{04eqn:estimate for odd n}
\end{gather}
Moreover, we have
\begin{equation}\label{04eqn:QEC increases}
-\frac{1}{2}=\mathrm{QEC}(K_1+P_3)
<\mathrm{QEC}(K_1+P_4)
<\mathrm{QEC}(K_1+P_5)
<\dots \rightarrow 0.
\end{equation}
\end{theorem}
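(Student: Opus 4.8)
The plan is to use Theorem~\ref{04thm:main formula for fan 1} to replace the entire statement by a study of the minimal zero $\alpha_n$ of $\phi_n(x)$, and then to split the zeros of $\phi_n$ through its divisibility by the partial Chebyshev polynomial $U^{\mathrm{e}}_n$. First I would verify this divisibility directly: at any zero $x_0=\cos\frac{2l\pi}{n+1}$ of $U^{\mathrm{e}}_n$ one has $U_n(x_0)=U^{\mathrm{e}}_n(x_0)U^{\mathrm{o}}_n(x_0)=0$ by Theorem~\ref{02thm:U=UeUo}, which kills the first term of \eqref{03eqn:def of phi(x)}, while $U_{n-1}(x_0)=\sin(n\cdot\tfrac{2l\pi}{n+1})/\sin\tfrac{2l\pi}{n+1}=-1$, so the second term $(x_0+1)(U_{n-1}(x_0)+1)$ also vanishes. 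Hence $U^{\mathrm{e}}_n\mid\phi_n$, and I write $\phi_n=U^{\mathrm{e}}_n\cdot\chi_n$ with $\chi_n$ a polynomial. Running the same evaluation at an odd-indexed eigenvalue gives instead $U_{n-1}(x_0)=+1$ and $\phi_n(x_0)=2(x_0+1)\ne0$, so the zeros of $\chi_n$ avoid the spectrum of $A_n$; thus, by Theorem~\ref{04thm:main formula for fan 1} and the factorization, $\alpha_n=\min\{\beta_n,\rho_n\}$ (consistently with the Case~I / Case~II split of Proposition~\ref{03prop:preliminary formula}), where $\beta_n$ is the least zero of $U^{\mathrm{e}}_n$ from \eqref{03eqn:beta_n} and $\rho_n$ (the $\sigma_n/2$ of \eqref{04eqn:sigma_n}) is the least zero of $\chi_n$.

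For the even case $K_1+P_{2n}$ I would prove that $\rho_{2n}\ge\beta_{2n}$, so $\alpha_{2n}=\beta_{2n}=\cos\frac{2n}{2n+1}\pi=-\cos\frac{\pi}{2n+1}$. This I would obtain either from the explicit factorization of $\chi_{2n}$ carried out in Section~\ref{sec:New Polynomials} (whose least zero is seen to exceed $\beta_{2n}$) or, self-containedly, by a sign argument: $\deg\phi_{2n}=2n+2$ with positive leading coefficient, so $\phi_{2n}(x)\to+\infty$ as $x\to-\infty$, and one checks $\phi_{2n}$ has no zero on $(-\infty,\beta_{2n})$. Substituting $\alpha_{2n}=-\cos\frac{\pi}{2n+1}$ into \eqref{04eqn:main formula for a fan graph} and using $1-\cos\varphi=2\sin^2(\varphi/2)$ gives $\mathrm{QEC}(K_1+P_{2n})=2\cos\frac{\pi}{2n+1}-2=-4\sin^2\frac{\pi}{2(2n+1)}$, which is \eqref{04eqn:main formula for even n}.

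For the odd case $K_1+P_{2n+1}$ the minimal zero comes from $\chi_{2n+1}$ and is not a closed-form trigonometric value, so I would localize it. Under $\alpha\mapsto-2\alpha-2$ the two endpoints of \eqref{04eqn:estimate for odd n} correspond to the test points $p=\cos\frac{2n+2}{2n+3}\pi$ and $q=\cos\frac{2n+1}{2n+2}\pi$, which satisfy $p<q<\beta_{2n+1}$. Using $U^{\mathrm{e}}_{2n+1}=U_n$, whose sign at $p$ and $q$ is read off from $U_n(\cos\theta)=\sin(n+1)\theta/\sin\theta$, I would evaluate $\phi_{2n+1}$ at $p$ and $q$, show the two values have opposite signs, and conclude by the intermediate value theorem that $\chi_{2n+1}$ has a zero in $(p,q)$; a sign check on $(-\infty,p)$ confirms this is the least zero of $\phi_{2n+1}$. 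Hence $\alpha_{2n+1}\in(p,q)$, and transporting this through \eqref{04eqn:main formula for a fan graph} yields precisely the bounds in \eqref{04eqn:estimate for odd n}.

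The monotonicity and convergence \eqref{04eqn:QEC increases} then follow cleanly, and I regard this as the easy part. Writing $a_N=\mathrm{QEC}(K_1+P_N)$, the upper bound of \eqref{04eqn:estimate for odd n} at index $2n-1$ gives $a_{2n-1}<-4\sin^2\frac{\pi}{2(2n+1)}=a_{2n}$, while its lower bound at index $2n+1$ gives $a_{2n+1}>-4\sin^2\frac{\pi}{2(2n+2)}>-4\sin^2\frac{\pi}{2(2n+1)}=a_{2n}$; chaining these inequalities yields $a_3<a_4<a_5<\cdots$, and since all the bounds tend to $0$ the sequence converges to $0$ by squeezing. The base value $a_3=-1/2$ is a direct check, with $\alpha_3=-3/4$ the minimal root of $\phi_3$. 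The principal obstacle is thus the zero-localization: the opposite-sign evaluations at $p,q$ and the verification that no smaller zero of $\chi_n$ exists (equivalently $\rho_{2n}\ge\beta_{2n}$ in the even case), both of which amount to controlling the sign of the cofactor $\chi_n$ and constitute the technical core addressed in Section~\ref{sec:New Polynomials}.
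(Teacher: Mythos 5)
Your high-level route is exactly the paper's: reduce the whole statement to the minimal zero $\alpha_n$ of $\phi_n$ via Theorem~\ref{04thm:main formula for fan 1}, split the zeros of $\phi_n$ through the factor $U^{\mathrm{e}}_n$, identify $\alpha_{2n}$ with $\beta_{2n}$, trap $\alpha_{2n+1}$ between $p=\cos\frac{2n+2}{2n+3}\pi$ and $q=\cos\frac{2n+1}{2n+2}\pi$, and chain the resulting bounds to get \eqref{04eqn:QEC increases}. The parts you actually carry out are correct: the divisibility $U^{\mathrm{e}}_n\mid\phi_n$ by evaluation (using $U_{n-1}=-1$ at the even-indexed cosines) is a legitimate shortcut past the paper's explicit computation of the cofactor $S_n$ in Theorem~\ref{04thm:factorization pf phi_n(x)}; the check $\alpha_3=-3/4$ is right; and the chaining of the even formula with the odd two-sided bounds is precisely how the monotonicity and convergence follow.

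The genuine gap is that all three decisive claims --- ``one checks $\phi_{2n}$ has no zero on $(-\infty,\beta_{2n})$,'' ``show the two values \emph{[at $p$ and $q$]} have opposite signs,'' and ``a sign check on $(-\infty,p)$ confirms this is the least zero'' --- are asserted, not proved, and none is a routine verification. Two ideas needed to close them never appear. First, a sign condition on an unbounded interval cannot be ``checked'' pointwise; one must account for \emph{every} zero of the cofactor. The paper does this by interlacing plus counting (Theorems~\ref{04thm:roots of S2n(x)} and \ref{06thm:roots of S2n+1(x)}): it evaluates the explicit cofactor $S_n$ at the points $\cos\frac{2k-1}{n+1}\pi$, obtains alternating signs, and since the number of zeros so located, together with the zero at $x=1$, equals $\deg S_n$, concludes there are no others --- in particular none below the test points. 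Your $\chi_n=\phi_n/U^{\mathrm{e}}_n$ is defined only abstractly, so you have neither the explicit formula that makes these evaluations possible nor the degree count that certifies minimality. Second, the critical signs are not free: they come down to the inequalities $n-(n+1)\cos\frac{\pi}{2n+1}<0$ (even case, via Lemma~\ref{04lem:special value of S2n(x)} and Theorem~\ref{03thm:comparison beta and gamma}) and $\cos\frac{\pi}{2n+3}>\frac{n+1}{n+2}$ (odd lower bound, via Lemma~\ref{03lem:S_2n+1(beta_2n+2)}), both of which are sharp --- equality occurs at $n=1$, which is exactly why $\beta_2=\gamma_2$ --- and both of which the paper settles only through the elementary inequality $\frac{1-x}{1+x}\le\cos\pi x$ on $[0,1/3]$ (Lemma~\ref{03lem:elementary calculus}). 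Without identifying these inequalities and the counting scheme, what you have is a faithful roadmap of the paper's own strategy whose technical core --- the content of Lemmas~\ref{04lem:special value of S2n(x)}--\ref{04lem:comparison of beta and gamma} --- is left open.
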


The proofs are based on Proposition \ref{03prop:preliminary formula},
and we need a careful analysis of zeroes of $\phi_n(x)$ and
$U^{\mathrm{e}}_n(x)$.
In the next section we will prove that
$\phi_n(x)$ is factored by $U^{\mathrm{e}}_n(x)$,
and in this line, we will obtain
explicit values and estimates of $\alpha_n$.

\begin{remark}\normalfont
A general formula is known for $\mathrm{QEC}(G_1+G_2)$
when both $G_1$ and $G_2$ are regular graphs 
\cite[Theorem 3.1]{Lou-Obata-Huang2022}.
The results in \eqref{04eqn:main formula for even n}
and \eqref{04eqn:estimate for odd n} show that
the general formula remains valid for $K_1+P_{2n}$ 
but not for $K_1+P_{2n+1}$.
\end{remark}

\begin{remark}\normalfont
The list of $\mathrm{QEC}(G)$ is available for
small graphs $G$ in \cite{Obata-Zakiyyah2018},
where $K_1+P_n$ for $n=1,2,3,4$ appears as
G2.1, G3.2, G4.5 and G5.16, respectively.
\end{remark}

\section{Key Polynomials $\phi_n(x)$}
\label{sec:New Polynomials}

In this section we examine the zeroes of the key polynomial:
\[
\phi_n(x)
=((n+1)x^2-3x-n)U_n(x)+(x+1)(U_{n-1}(x)+1),
\qquad n\ge0,
\]
introduced in \eqref{03eqn:def of phi(x)}
and complete the proof of the main results
(Theorems \ref{04thm:main formula for fan 1} and
\ref{04thm:main formula for fan 2}).
For that purpose, with each $n\ge0$ we associate
polynomials $S_n(x)$ defined by
\begin{align}
S_{2n}(x)&=(2nx+x+2n-1)U_n(x)-(2nx+3x+2n+1)U_{n-1}(x),
\label{04eqn:def of S_2n} \\
S_{2n+1}(x)&=2(2nx^2+2x^2+2nx-x-1)U_n(x)-2(2nx+3x+2n+1)U_{n-1}(x).
\label{04eqn:def of S_2n+1}
\end{align}

\subsection{Factorization}

\begin{theorem}\label{04thm:factorization pf phi_n(x)}
For $n\ge0$ we have a factorization:
\begin{equation}\label{04eqn:factorization pf phi_n(x)}
\phi_n(x)=(x-1)\cdot U_{n}^{\mathrm{e}}(x)\cdot S_n(x).
\end{equation}
\end{theorem}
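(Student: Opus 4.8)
The plan is to verify the identity $\phi_n(x)=(x-1)\cdot U^{\mathrm{e}}_n(x)\cdot S_n(x)$ by direct algebraic manipulation, splitting into the even and odd cases $n=2m$ and $n=2m+1$, and in each case reducing everything to a polynomial identity in $U_m(x)$ and $U_{m-1}(x)$ using the conversion formulas of Theorem \ref{02thm:Ue and Uo by U}. The advantage of this approach is that all three factors on the right-hand side are already expressed, or easily expressible, in terms of the classical $U_k(x)$, and the left-hand side $\phi_n(x)$ is built from $U_n(x)$ and $U_{n-1}(x)$, which themselves factor through $U_m(x)$ and $U_{m-1}(x)$ via Theorem \ref{02thm:factorization of U}.

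First I would record the ingredients for the even case $n=2m$. By Theorem \ref{02thm:product expressions} (or \eqref{for:ueuoeven}), we have $U^{\mathrm{e}}_{2m}(x)=U_m(x)+U_{m-1}(x)$, while Theorem \ref{02thm:factorization of U} gives $U_{2m}=(U_m+U_{m-1})(U_m-U_{m-1})$ and $U_{2m-1}=U_{m-1}(U_m-U_{m-2})$, with $U_{m-2}$ eliminated through the three-term recurrence $U_{m-2}=2xU_{m-1}-U_m$. Substituting these into the definition \eqref{03eqn:def of phi(x)} of $\phi_{2m}(x)$ expresses $\phi_{2m}(x)$ entirely in terms of $U_m$, $U_{m-1}$ and the scalar polynomial coefficients in $x$. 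On the other side, $(x-1)U^{\mathrm{e}}_{2m}(x)S_{2m}(x)=(x-1)(U_m+U_{m-1})S_{2m}(x)$, and $S_{2m}(x)$ is already given in \eqref{04eqn:def of S_2n} as an explicit $\mathbb{Z}[x]$-combination of $U_m$ and $U_{m-1}$. The task then reduces to checking equality of two expressions of the shape $A(x)U_m^2+B(x)U_mU_{m-1}+C(x)U_{m-1}^2$, which is a finite verification since the matching must hold coefficient-by-coefficient in the basis $\{U_m^2,U_mU_{m-1},U_{m-1}^2\}$; here one may further use $U_m^2-U_{m+1}U_{m-1}=1$ to normalize, though I expect direct comparison of coefficients to suffice.

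The odd case $n=2m+1$ is handled identically in spirit: \eqref{for:ueuoodd} gives the clean relation $U^{\mathrm{e}}_{2m+1}(x)=U_m(x)$, and Theorem \ref{02thm:factorization of U} supplies $U_{2m+1}=U_m(U_{m+1}-U_{m-1})$ together with $U_{2m}=(U_m+U_{m-1})(U_m-U_{m-1})$; after eliminating $U_{m+1}=2xU_m-U_{m-1}$ via the recurrence, $\phi_{2m+1}(x)$ again becomes a quadratic form in $U_m,U_{m-1}$ with polynomial coefficients. The right-hand side $(x-1)U_m(x)S_{2m+1}(x)$ uses the explicit $S_{2m+1}(x)$ from \eqref{04eqn:def of S_2n+1}, and one compares coefficients of $U_m^2$, $U_mU_{m-1}$ and $U_{m-1}^2$ exactly as before.

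I expect the main obstacle to be purely bookkeeping rather than conceptual: keeping the polynomial coefficients in $x$ organized through the repeated use of the recurrence to lower indices, and making sure the degenerate small cases ($n=0,1$, where the products defining $U^{\mathrm{e}}_n$ are empty and $U_{-1}=0$, $U_{-2}=-1$) are consistent with the stated conventions. Since the statement asserts an identity between polynomials of the same degree $n+2$, it would suffice in principle to check agreement at $n+3$ points, but the cleaner and more transparent route is the coefficient matching in the $\{U_m^2,U_mU_{m-1},U_{m-1}^2\}$ basis described above, which makes the verification mechanical and leaves no case-analysis gaps.
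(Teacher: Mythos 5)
Your proposal is correct and is essentially the paper's own proof: the paper likewise splits into the cases $n=2m$ and $n=2m+1$, uses the factorizations $U_{2m}=U^{\mathrm{e}}_{2m}(U_m-U_{m-1})$ and $U_{2m-1}+1=U^{\mathrm{e}}_{2m}(U_{m-1}-U_{m-2})$ (resp.\ their odd-case analogues, all coming from Theorems \ref{02thm:Ue and Uo by U} and \ref{02thm:factorization of U}) together with the recurrence to eliminate $U_{m-2}$, and then finishes by direct polynomial algebra. The only cosmetic difference is that the paper keeps $U^{\mathrm{e}}_n$ factored out throughout (absorbing the dangling constant via the factorization of $U_{2m-1}+1$ rather than via $U_m^2-U_{m+1}U_{m-1}=1$) and then extracts $(x-1)$ to recognize $S_n$, whereas you expand both sides as quadratic forms in $U_m,U_{m-1}$ and match coefficients---which indeed succeeds, noting that matching coefficients is sufficient for equality even without arguing that such representations are unique.
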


\begin{proof}
Let $n\ge0$.
In view of \eqref{02eqn:factorization of U_2n and U_2n+1},
\eqref{for:unplus1} and \eqref{for:ueuoeven} we obtain
\begin{align*}
U_{2n}
&=(U_n+U_{n-1})(U_n-U_{n-1})=U^{\mathrm{e}}_{2n}(U_n-U_{n-1}),\\
U_{2n-1}+1
&=(U_n+U_{n-1})(U_{n-1}-U_{n-2})
=U^{\mathrm{e}}_{2n}(U_{n-1}-U_{n-2}).
\end{align*}
Then we have
\begin{align*}
\phi_{2n}(x)
&=((2n+1)x^2-3x-2n)U_{2n}+(x+1)(U_{2n-1}+1) \\
&=U^{\mathrm{e}}_{2n}
  \big\{((2n+1)x^2-3x-2n)(U_n-U_{n-1})+(x+1)(U_{n-1}-U_{n-2})\big\}.
\end{align*}
Now we apply $U_{n-2}=2x U_{n-1}-U_n$ to get
\begin{align*}
\phi_{2n}(x)
&=U^{\mathrm{e}}_{2n}
 \big\{((2n+1)x^2-2x-2n+1)U_n-((2n+3)x^2-2x-2n-1)U_{n-1}\big\} \\
&=U^{\mathrm{e}}_{2n}(x-1)
 \big\{(2nx+x+2n-1)U_n-(2nx+3x+2n+1)U_{n-1}\big\} \\
&=(x-1)U^{\mathrm{e}}_{2n} S_{2n},
\end{align*}
which proves \eqref{04eqn:factorization pf phi_n(x)} for an even $n$.
By a similar calculation using
\begin{align*}
U_{2n+1}
&=U_n(U_{n+1}-U_{n-1})=U^{\mathrm{e}}_{2n+1}(U_{n+1}-U_{n-1}),\\
U_{2n}+1
&=U_n(U_n-U_{n-2})=U^{\mathrm{e}}_{2n+1}(U_n-U_{n-2}),
\end{align*}
which follow from 
\eqref{02eqn:factorization of U_2n and U_2n+1},
\eqref{for:unplus1} and \eqref{for:ueuoodd},
we come to \eqref{04eqn:factorization pf phi_n(x)} for an odd $n$.
\end{proof}

\subsection{Zeroes of $S_n(x)$}

\begin{lemma}\label{04lem:special value of S2n(x)}
Let $n\ge1$.
For $1\le k\le n$, we have
\begin{align}
S_{2n}\left(\cos\frac{2k\pi}{2n+1}\right)
&=2(-1)^k \left(\cos\dfrac{k\pi}{2n+1}\right)^{-1}
 \left(n+(n+1)\cos\dfrac{2k\pi}{2n+1}\right),
\label{04eqn:special values of Un-1 even 2k} \\
S_{2n}\left(\cos\frac{2k-1}{2n+1}\pi\right)
&=(-1)^k\left(\sin\dfrac{2k-1}{2(2n+1)}\pi\right)^{-1}
 \left(1+\cos\dfrac{2k-1}{2n+1}\pi\right),
\label{04eqn:special values of Un-1 odd 2k-1}
\end{align}
and 
\begin{equation}\label{04eqn:S2n(1) and S2n(-1)}
S_{2n}(1)=0,
\qquad
S_{2n}(-1)=2(-1)^{n-1}(2n+1).
\end{equation}
\end{lemma}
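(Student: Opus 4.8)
The plan is to evaluate $S_{2n}$ directly from the trigonometric definition $U_m(\cos\theta)=\sin(m+1)\theta/\sin\theta$, treating the two families of points uniformly. Both $\cos\frac{2k\pi}{2n+1}$ and $\cos\frac{2k-1}{2n+1}\pi$ have the form $x=\cos\theta$ with $\theta=\frac{m\pi}{2n+1}$, where $m=2k$ in the first case and $m=2k-1$ in the second. The decisive observation is that these are precisely the angles for which $(2n+1)\theta=m\pi$, so $(n+1)\theta=m\pi-n\theta$ and hence $\sin(n+1)\theta=(-1)^{m+1}\sin n\theta$. Dividing by $\sin\theta$ yields the collapse $U_n(\cos\theta)=(-1)^{m+1}U_{n-1}(\cos\theta)$, which reduces the two-term expression $S_{2n}$ to a single scalar multiple of $U_{n-1}(\cos\theta)$ times a linear polynomial in $x$.

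Concretely, substituting $U_n=(-1)^{m+1}U_{n-1}$ into the definition \eqref{04eqn:def of S_2n} of $S_{2n}$, the two bracketed coefficients either add or subtract according to the parity of $m$. For even $m=2k$ we have $(-1)^{m+1}=-1$, the coefficients add, and the linear factor simplifies to a multiple of $(n+1)x+n=n+(n+1)\cos\frac{2k\pi}{2n+1}$; for odd $m=2k-1$ we have $(-1)^{m+1}=+1$, the coefficients subtract, and the linear factor collapses to a multiple of $x+1=1+\cos\frac{2k-1}{2n+1}\pi$. This already produces the desired linear factors appearing in \eqref{04eqn:special values of Un-1 even 2k} and \eqref{04eqn:special values of Un-1 odd 2k-1}.

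It then remains to evaluate $U_{n-1}(\cos\theta)=\sin n\theta/\sin\theta$ at these angles, and this is where the two cases genuinely diverge. Writing $n\theta=\frac{m\pi}{2}-\frac{\theta}{2}$ (again from $(2n+1)\theta=m\pi$) and expanding, the parity of $m$ determines whether $\sin n\theta$ equals $\pm\sin\frac{\theta}{2}$ (even $m$) or $\pm\cos\frac{\theta}{2}$ (odd $m$). Combined with $\sin\theta=2\sin\frac{\theta}{2}\cos\frac{\theta}{2}$, the common half-angle factor cancels, leaving $U_{n-1}=\pm\bigl(2\cos\frac{\theta}{2}\bigr)^{-1}$ in the even case and $U_{n-1}=\pm\bigl(2\sin\frac{\theta}{2}\bigr)^{-1}$ in the odd case, with $\theta/2=\frac{k\pi}{2n+1}$ respectively $\frac{2k-1}{2(2n+1)}\pi$. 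Multiplying by the linear factors obtained in the previous step reproduces \eqref{04eqn:special values of Un-1 even 2k} and \eqref{04eqn:special values of Un-1 odd 2k-1} exactly, including the overall sign $(-1)^k$.

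Finally, the endpoint values \eqref{04eqn:S2n(1) and S2n(-1)} follow by direct substitution using the elementary special values $U_m(1)=m+1$ and $U_m(-1)=(-1)^m(m+1)$: at $x=1$ the two coefficients of \eqref{04eqn:def of S_2n} become $4n$ and $4n+4$, and since $U_n(1)=n+1$, $U_{n-1}(1)=n$, the expression cancels to $0$; at $x=-1$ both coefficients reduce to $-2$, giving $-2\bigl(U_n(-1)-U_{n-1}(-1)\bigr)=2(-1)^{n-1}(2n+1)$. I expect the main obstacle to be the sign and parity bookkeeping in the half-angle reduction, in particular correctly tracking the factors $(-1)^{m+1}$ together with $\sin\frac{m\pi}{2}$ and $\cos\frac{m\pi}{2}$, so that the two sub-cases land on $\cos^{-1}$ versus $\sin^{-1}$ in the denominators and the net sign emerges as the stated $(-1)^k$.
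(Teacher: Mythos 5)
Your proof is correct and follows essentially the same route as the paper: a direct trigonometric evaluation of $U_n$ and $U_{n-1}$ at the angles $\theta=m\pi/(2n+1)$ from the definition \eqref{01eqn:def of U_n(x)}, via half-angle reductions, then substitution into \eqref{04eqn:def of S_2n}. Your preliminary collapse $U_n(\cos\theta)=(-1)^{m+1}U_{n-1}(\cos\theta)$ is a modest streamlining (it is implicit in the paper's listed special values, which satisfy $U_n=\mp U_{n-1}$ at the two families of points), but the underlying computation is identical.
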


\begin{proof}
By definition \eqref{04eqn:def of S_2n} for any $l\in\mathbb{Z}$ we have
\begin{align}
S_{2n}\left(\cos\frac{l\pi}{2n+1}\right)
&=\left((2n+1)\cos\frac{l\pi}{2n+1}+2n-1\right)
  U_n\left(\cos\frac{l\pi}{2n+1}\right) 
\nonumber \\
&\qquad  -\left((2n+3)\cos\frac{l\pi}{2n+1}+2n+1\right)
  U_{n-1}\left(\cos\frac{l\pi}{2n+1}\right).
\label{03eqn:in proof Lemma 3.2}
\end{align}
Setting $l=0$ and $l=2n+1$, we obtain
\eqref{04eqn:S2n(1) and S2n(-1)} by using $U_n(\pm1)=(\pm1)^n (n+1)$. 
For $1\le k\le n$, \eqref{04eqn:special values of Un-1 even 2k} 
and \eqref{04eqn:special values of Un-1 odd 2k-1} follow by
setting $l=2k$ and $l=2k-1$ in \eqref{03eqn:in proof Lemma 3.2}
and applying the following formulas:
\begin{gather*}
U_n\bigg(\cos\frac{2k\pi}{2n+1}\bigg)
=\frac{(-1)^k}{2}\bigg(\cos\dfrac{k\pi}{2n+1}\bigg)^{-1},
\\
U_{n-1}\bigg(\cos\frac{2k\pi}{2n+1}\bigg)
=\frac{(-1)^{k-1}}{2}\bigg(\cos\dfrac{k\pi}{2n+1}\bigg)^{-1},
\\
U_n\bigg(\cos\frac{2k-1}{2n+1}\pi\bigg)
=U_{n-1}\bigg(\cos\frac{2k-1}{2n+1}\pi\bigg)
=\frac{(-1)^{k-1}}{2}\bigg(\sin\dfrac{2k-1}{2(2n+1)}\pi\bigg)^{-1},
\end{gather*}
which are verified directly by the original definition 
\eqref{01eqn:def of U_n(x)}.
\end{proof}

\begin{theorem}\label{04thm:roots of S2n(x)}
Let $n\ge1$ and set
\[
\xi_0=1, 
\qquad
\xi_{n+1}=-1,
\qquad
\xi_k=\cos\frac{2k-1}{2n+1}\pi,
\qquad 1\le k\le n.
\]
Then
$1=\xi_0>\xi_1>\cdots>\xi_k>\xi_{k+1}>\cdots>\xi_n>\xi_{n+1}=-1$
and $S_{2n}(x)$ has a unique zero in the interval
$(\xi_{k+1},\xi_k)$ for any $1\le k \le n$.
These $n$ zeroes together with $x=\xi_0=1$ exhaust
the zeroes of $S_{2n}(x)$, which are all simple.
(The statement remains true for $n=0$ 
by natural interpretation since $x=1$ is the
only zero of $S_0(x)=x-1$.)
\end{theorem}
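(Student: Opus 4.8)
The plan is to pin down the zeroes of $S_{2n}(x)$ by a sign-change argument combined with a degree count, feeding in precisely the special values recorded in Lemma~\ref{04lem:special value of S2n(x)}. First I would settle the trivial ordering of the nodes: since $t\mapsto\cos t$ is strictly decreasing on $[0,\pi]$ and the angles $\frac{2k-1}{2n+1}\pi$ increase strictly from $\frac{\pi}{2n+1}$ (at $k=1$) to $\frac{2n-1}{2n+1}\pi$ (at $k=n$), all lying in $(0,\pi)$, we obtain $1=\xi_0>\xi_1>\cdots>\xi_n>\xi_{n+1}=-1$ at once. Next I would record that $S_{2n}(x)$ has degree exactly $n+1$: the only contribution to the top degree comes from $(2n+1)x\cdot U_n(x)$, whose leading term is $(2n+1)2^n x^{n+1}$, while the term carrying $U_{n-1}(x)$ contributes degree at most $n$; in particular the leading coefficient $(2n+1)2^n$ is nonzero, so there is no accidental drop in degree.

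The heart of the argument is the sign pattern of $S_{2n}$ at the nodes $\xi_0,\dots,\xi_{n+1}$. From \eqref{04eqn:special values of Un-1 odd 2k-1}, for $1\le k\le n$ the value $S_{2n}(\xi_k)$ equals $(-1)^k$ times the manifestly positive quantity $\bigl(\sin\frac{2k-1}{2(2n+1)}\pi\bigr)^{-1}\bigl(1+\cos\frac{2k-1}{2n+1}\pi\bigr)$ (positive because $\frac{2k-1}{2(2n+1)}\pi\in(0,\pi/2)$ and $\frac{2k-1}{2n+1}\pi\in(0,\pi)$ gives $\cos\frac{2k-1}{2n+1}\pi\in(-1,1)$), so $\operatorname{sign}S_{2n}(\xi_k)=(-1)^k$. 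At the two ends, \eqref{04eqn:S2n(1) and S2n(-1)} gives $S_{2n}(\xi_0)=S_{2n}(1)=0$ and $\operatorname{sign}S_{2n}(\xi_{n+1})=\operatorname{sign}S_{2n}(-1)=(-1)^{n-1}$. Reading off consecutive signs, $S_{2n}(\xi_k)$ and $S_{2n}(\xi_{k+1})$ are opposite for each $1\le k\le n-1$, and likewise $S_{2n}(\xi_n)$ (sign $(-1)^n$) and $S_{2n}(\xi_{n+1})$ (sign $(-1)^{n-1}$) are opposite; by the intermediate value theorem $S_{2n}$ has at least one zero strictly inside each interval $(\xi_{k+1},\xi_k)$ for $1\le k\le n$.

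Finally I would close the count. These $n$ interval-zeroes are mutually distinct (the intervals are disjoint), lie in $(-1,1)$, and differ from $1$, so together with the known root $x=\xi_0=1$ they furnish $n+1$ distinct real zeroes of $S_{2n}$. Since $\deg S_{2n}=n+1$, the degree bound forces these to be all the zeroes and forces each to be simple; in particular each interval $(\xi_{k+1},\xi_k)$ contains exactly one zero, giving the uniqueness claim. I do not expect a genuine obstacle here: the only points demanding care are the endpoint bookkeeping at $\xi_{n+1}=-1$, where one must invoke the nonzero value $S_{2n}(-1)=2(-1)^{n-1}(2n+1)$ rather than the vanishing value at $\xi_0=1$ to secure the final sign change, and the verification that the degree really is $n+1$, which is exactly what converts the ``at least one'' from the intermediate value theorem into ``exactly one, all simple.''
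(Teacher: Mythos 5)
Your proposal is correct and follows essentially the same route as the paper's proof: the alternating signs $(-1)^k$ at the nodes $\xi_k$ from Lemma \ref{04lem:special value of S2n(x)}, the separate endpoint check at $\xi_{n+1}=-1$ using $S_{2n}(-1)=2(-1)^{n-1}(2n+1)$, the intermediate value theorem to place one zero in each interval, and the degree count $\deg S_{2n}=n+1$ together with the known root at $x=1$ to force exactness and simplicity. Your explicit verification that the leading coefficient is $(2n+1)2^n$ is a detail the paper leaves implicit, but the argument is the same.
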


\begin{proof}
We will show that
\begin{equation}\label{06eqn:interlacing S2n}
S_{2n}(\xi_k)S_{2n}(\xi_{k+1})<0,
\qquad 1\le k \le n.
\end{equation}
It follows from Lemma \ref{04lem:special value of S2n(x)} that
\[
S_{2n}(\xi_k)
=(-1)^k\bigg(\sin\dfrac{2k-1}{2(2n+1)}\pi\bigg)^{-1}
 \bigg(1+\cos\dfrac{2k-1}{2n+1}\pi\bigg),
\qquad 1\le k \le n,
\]
and hence $S_{2n}(\xi_k)S_{2n}(\xi_{k+1})<0$ for any $1\le k \le n-1$.
Namely, \eqref{06eqn:interlacing S2n} is valid for $1\le k \le n-1$.
Moreover, since 
\[
S_{2n}(\xi_{n+1})=S_{2n}(-1)=2(-1)^{n-1}(2n+1)
\]
by Lemma \ref{04lem:special value of S2n(x)},
we have
\begin{align*}
&S_{2n}(\xi_n)S_{2n}(\xi_{n+1})
\\
&=(-1)^n\bigg(\sin\dfrac{2n-1}{2(2n+1)}\pi\bigg)^{-1}
 \bigg(1+\cos\dfrac{2n-1}{2n+1}\pi\bigg)
 \times (-1)^{n-1}(2n+1)
<0,
\end{align*}
which means that 
\eqref{06eqn:interlacing S2n} is valid also for $k=n$.
We see from 
\eqref{06eqn:interlacing S2n} with the intermediate value theorem
that $S_{2n}(x)$ has at least one zero in the interval
$(\xi_{k+1},\xi_k)$ for each $1\le k\le n$.
On the other hand, we know that $S_{2n}(1)=0$.
Since $S_{2n}$ is a polynomial of order $n+1$, 
each interval $(\xi_{k+1},\xi_k)$ contains exactly one zero
for each $1\le k\le n$,
and all zeroes of $S_{2n}(x)$ are simple.
\end{proof}

\begin{lemma}\label{04lem:special value of S2n+1(x)}
Let $n\ge1$.
For $1\le k\le n$ we have
\begin{equation}\label{06eqn:special values of S2n+1 even 2k}
S_{2n+1}\bigg(\cos\frac{2k\pi}{2n+2}\bigg)
=2(-1)^k \bigg(2n+1+(2n+3)\cos\dfrac{k\pi}{n+1}\bigg),
\end{equation}
and for $1\le k\le n+1$ we have
\begin{equation}\label{06eqn:special values of S2n+1 odd 2k-1}
S_{2n+1}\bigg(\cos\frac{2k-1}{2n+2}\pi\bigg)
=2(-1)^k\bigg(\sin\dfrac{2k-1}{2n+2}\pi\bigg)^{-1}
 \bigg(1+\cos\dfrac{2k-1}{2n+2}\pi\bigg)^2.
\end{equation}
Moreover,
\begin{equation}\label{06eqn:S2n+1(1) and S2n+1(-1)}
S_{2n+1}(1)=0,
\qquad
S_{2n+1}(-1)=4(-1)^n.
\end{equation}
\end{lemma}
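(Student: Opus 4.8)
The plan is to mimic the proof of the even-index companion, Lemma~\ref{04lem:special value of S2n(x)}. First I would rewrite the definition \eqref{04eqn:def of S_2n+1} by collecting the coefficients of $U_n$ and $U_{n-1}$, namely
\[
S_{2n+1}(x)=2\bigl((2n+2)x^2+(2n-1)x-1\bigr)U_n(x)-2\bigl((2n+3)x+(2n+1)\bigr)U_{n-1}(x),
\]
and then evaluate at the generic argument $x=\cos\frac{l\pi}{2n+2}$ for $l\in\mathbb{Z}$. The two Chebyshev values needed are read directly from the original definition \eqref{01eqn:def of U_n(x)}:
\[
U_n\Bigl(\cos\tfrac{l\pi}{2n+2}\Bigr)=\frac{\sin(l\pi/2)}{\sin\bigl(l\pi/(2n+2)\bigr)},
\qquad
U_{n-1}\Bigl(\cos\tfrac{l\pi}{2n+2}\Bigr)=\frac{\sin\bigl(nl\pi/(2n+2)\bigr)}{\sin\bigl(l\pi/(2n+2)\bigr)}.
\]
Everything then reduces to simplifying these ratios for $l$ even and $l$ odd separately, using that $nl/(2n+2)=l/2-l/(2n+2)$.

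For the even case $l=2k$ with $1\le k\le n$, one has $\sin(l\pi/2)=\sin k\pi=0$, so $U_n$ vanishes and only the $U_{n-1}$-term survives. A short angle-subtraction computation gives $U_{n-1}(\cos\frac{k\pi}{n+1})=(-1)^{k-1}$, and substituting this (together with $\cos\frac{2k\pi}{2n+2}=\cos\frac{k\pi}{n+1}$) immediately yields \eqref{06eqn:special values of S2n+1 even 2k}. For the odd case $l=2k-1$ with $1\le k\le n+1$, I would record that both values share the common factor $(-1)^{k-1}\bigl(\sin\frac{(2k-1)\pi}{2n+2}\bigr)^{-1}$, with $U_{n-1}$ carrying an extra factor $c:=\cos\frac{(2k-1)\pi}{2n+2}$; this comes from $\sin\frac{(2k-1)\pi}{2}=(-1)^{k-1}$, $\cos\frac{(2k-1)\pi}{2}=0$.

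The crux — and the step I expect to be the main (though purely algebraic) obstacle — is the collapse of the surviving polynomial combination into a perfect square. After pulling out the common factor, the bracket to be simplified is
\[
\bigl((2n+2)c^2+(2n-1)c-1\bigr)-\bigl((2n+3)c+(2n+1)\bigr)c=-(c+1)^2,
\]
where the $c^2$, $c$, and constant terms must each cancel correctly; this is exactly the identity that makes the clean factorization work and produces \eqref{06eqn:special values of S2n+1 odd 2k-1}. Finally, the endpoint values in \eqref{06eqn:S2n+1(1) and S2n+1(-1)} follow by direct substitution of $U_m(\pm1)=(\pm1)^m(m+1)$: at $x=1$ the coefficients are $4n$ and $4n+4$, giving $S_{2n+1}(1)=8n(n+1)-8n(n+1)=0$, while at $x=-1$ the coefficients are $2$ and $-2$, giving $S_{2n+1}(-1)=4(-1)^n(n+1)-4(-1)^n n=4(-1)^n$. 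I would double-check the perfect-square identity explicitly, since the whole argument (and the later interlacing of zeroes of $S_{2n+1}$) hinges on the sign pattern $(-1)^k$ it delivers.
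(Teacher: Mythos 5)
Your proposal is correct and follows essentially the same route as the paper: the paper omits this proof as ``computational as in that of Lemma \ref{04lem:special value of S2n(x)},'' i.e., evaluating \eqref{04eqn:def of S_2n+1} at $x=\cos\frac{l\pi}{2n+2}$, computing $U_n$ and $U_{n-1}$ at these points from \eqref{01eqn:def of U_n(x)}, and splitting into the cases $l=2k$, $l=2k-1$, $l=0$, $l=2n+2$, which is exactly what you do. Your key evaluations ($U_n=0$, $U_{n-1}=(-1)^{k-1}$ in the even case; the common factor with the extra $\cos\frac{(2k-1)\pi}{2n+2}$ in the odd case) and the collapse of the bracket to $-(c+1)^2$ all check out.
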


The proof is computational 
as in that of Lemma \ref{04lem:special value of S2n(x)}
and is omitted.

\if0
\begin{proof}
By definition \eqref{04eqn:def of S_2n+1}, for any $l\in\mathbb{Z}$ we have
\begin{align}
&S_{2n+1}\bigg(\cos\frac{l\pi}{2n+2}\bigg)
\nonumber \\
&\qquad =2\bigg(
 (2n+2)\cos^2\frac{l\pi}{2n+2}+(2n-1)\cos\frac{l\pi}{2n+2}-1\bigg)
  U_n\bigg(\cos\frac{l\pi}{2n+2}\bigg)
\nonumber \\
&\qquad\qquad
 -2\bigg((2n+3)\cos\frac{l\pi}{2n+2}+2n+1\bigg)
  U_{n-1}\bigg(\cos\frac{l\pi}{2n+2}\bigg).
\label{06eqn:S2n+1(l)}
\end{align}
Setting $l=0$ and $l=2n+2$, 
we obtain \eqref{06eqn:S2n+1(1) and S2n+1(-1)} immediately.
\eqref{06eqn:special values of S2n+1 even 2k}
and \eqref{06eqn:special values of S2n+1 odd 2k-1} are obtained
by calculating \eqref{06eqn:S2n+1(l)} 
by using the formulas:
\begin{gather*}
U_n\left(\cos\frac{2k\pi}{2n+2}\right)=0,
\qquad
U_{n-1}\left(\cos\frac{2k\pi}{2n+2}\right)=(-1)^{k-1}.
\\
U_n\bigg(\cos\frac{2k-1}{2n+2}\pi\bigg)
=(-1)^{k-1}\, \bigg(\sin\dfrac{2k-1}{2n+2}\pi\bigg)^{-1},
\\
U_{n-1}\bigg(\cos\frac{2k-1}{2n+2}\pi\bigg)
=(-1)^{k-1} \bigg(\sin\dfrac{2k-1}{2n+2}\pi\bigg)^{-1}
\cos\dfrac{2k-1}{2n+2}\pi,
\end{gather*}
which are derived 
from the original definition \eqref{01eqn:def of U_n(x)}.
\end{proof}
\fi

\begin{theorem}\label{06thm:roots of S2n+1(x)}
Let $n\ge0$ and set
\[
\xi_0=1, 
\qquad
\xi_{n+2}=-1,
\qquad
\xi_k=\cos\frac{2k-1}{2n+2}\pi,
\qquad 1\le k\le n+1.
\]
Then 
$1=\xi_0>\xi_1>\cdots>\xi_k>\xi_{k+1}>\cdots>\xi_{n+1}>\xi_{n+2}=-1$
and $S_{2n+1}(x)$ has a unique zero in the interval
$(\xi_{k+1},\xi_k)$ for any $1\le k \le n+1$.
These $n+1$ zeroes together with $x=\xi_0=1$
exhaust the zeroes of $S_{2n+1}(x)$, which are all simple.
\end{theorem}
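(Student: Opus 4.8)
The plan is to mirror the argument given for the even case in Theorem~\ref{04thm:roots of S2n(x)}, since $S_{2n+1}(x)$ has degree $n+2$ by \eqref{04eqn:def of S_2n+1} and we are asserting exactly $n+2$ simple zeroes, namely the $n+1$ zeroes produced by interlacing together with $x=\xi_0=1$. First I would record the strict ordering $1=\xi_0>\xi_1>\cdots>\xi_{n+1}>\xi_{n+2}=-1$, which is immediate because $\cos t$ is strictly decreasing on $(0,\pi)$ and the arguments $\tfrac{2k-1}{2n+2}\pi$ increase through $(0,\pi)$ as $k$ runs from $1$ to $n+1$.

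The core of the proof is the sign-interlacing inequality $S_{2n+1}(\xi_k)\,S_{2n+1}(\xi_{k+1})<0$ for $1\le k\le n+1$; once this is in hand, the intermediate value theorem furnishes at least one zero in each open interval $(\xi_{k+1},\xi_k)$. For the indices $1\le k\le n$ the inequality reads off directly from \eqref{06eqn:special values of S2n+1 odd 2k-1}: the factors $\bigl(\sin\tfrac{2k-1}{2n+2}\pi\bigr)^{-1}$ and $\bigl(1+\cos\tfrac{2k-1}{2n+2}\pi\bigr)^2$ are strictly positive throughout $1\le k\le n+1$, so the sign of $S_{2n+1}(\xi_k)$ equals $(-1)^k$ and consecutive values alternate.

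The one step that is not handled uniformly by \eqref{06eqn:special values of S2n+1 odd 2k-1}, and which I expect to be the main (though modest) obstacle, is the final interval $(\xi_{n+2},\xi_{n+1})=(-1,\xi_{n+1})$, whose endpoint $-1$ is not of the form $\cos\tfrac{2k-1}{2n+2}\pi$. Here I would instead invoke the boundary value $S_{2n+1}(-1)=4(-1)^n$ from \eqref{06eqn:S2n+1(1) and S2n+1(-1)}; combining its sign $(-1)^n$ with the sign $(-1)^{n+1}$ of $S_{2n+1}(\xi_{n+1})$ gives $S_{2n+1}(\xi_{n+1})\,S_{2n+1}(-1)<0$, which closes the interlacing at $k=n+1$.

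Finally I would argue by degree. The interlacing yields $n+1$ distinct zeroes, one in each open interval, and $S_{2n+1}(1)=0$ contributes one further zero at $\xi_0=1$, giving $n+2$ pairwise distinct zeroes. Since $S_{2n+1}(x)$ has degree $n+2$, a polynomial cannot have more zeroes than its degree, so these exhaust the zero set and each is necessarily simple.
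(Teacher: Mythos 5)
Your proposal follows the paper's own proof almost verbatim: the same sign-alternation read off from \eqref{06eqn:special values of S2n+1 odd 2k-1}, the same use of the boundary value $S_{2n+1}(-1)=4(-1)^n$ to close the interlacing at $k=n+1$, then the intermediate value theorem plus the degree count $n+2$ (including the zero at $x=1$) to conclude exactness and simplicity.

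The one point you miss is the case $n=0$, which the theorem includes but your argument does not cover: Lemma \ref{04lem:special value of S2n+1(x)}, from which you take both \eqref{06eqn:special values of S2n+1 odd 2k-1} and \eqref{06eqn:S2n+1(1) and S2n+1(-1)}, is stated only for $n\ge1$, so for $n=0$ you have no licensed value of $S_1(\xi_1)$ or $S_1(-1)$ to feed into the interlacing step. The paper disposes of this case separately by direct inspection: $S_1(x)=2(2x+1)(x-1)$ has its unique zero $x=-1/2$ in $(\xi_2,\xi_1)=(-1,0)$, and the assertion follows. This is a trivial repair (the lemma's formulas do in fact extend to $n=0$, or one can just compute $S_1$ as the paper does), but as written your proof proves the theorem only for $n\ge1$.
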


\begin{proof}
For $n=0$ we have $\xi_0=1$, $\xi_1=0$ and $\xi_2=-1$.
In view of $S_1(x)=2(2x+1)(x-1)$ we see that
$S_1(x)$ has a unique zero (in fact $x=-1/2$) 
in the interval $(\xi_2,\xi_1)=(-1,0)$.
Then the assertion for $n=0$ is obvious.
Now we assume that $n\ge1$.
We will show that
\begin{equation}\label{06eqn:interlacing S2n+1}
S_{2n+1}(\xi_k)S_{2n+1}(\xi_{k+1})<0,
\qquad 1\le k \le n+1.
\end{equation}
In fact, \eqref{06eqn:interlacing S2n+1} is verified for $1\le k\le n$
by using
\[
S_{2n+1}(\xi_k)
=2(-1)^k\bigg(\sin\dfrac{2k-1}{2n+2}\pi\bigg)^{-1}
 \bigg(1+\cos\dfrac{2k-1}{2n+2}\pi\bigg)^2,
\qquad 1\le k\le n+1,
\]
see \eqref{06eqn:special values of S2n+1 odd 2k-1}
in Lemma \ref{04lem:special value of S2n+1(x)}.
Moreover, since $S_{2n+1}(-1)=4(-1)^n$ 
by in Lemma \ref{04lem:special value of S2n+1(x)},
we have
\begin{align*}
&S_{2n+1}(\xi_{n+1})S_{2n+1}(\xi_{n+2})
\\
&\qquad =2(-1)^{n+1}\bigg(\sin\dfrac{2n+1}{2n+2}\pi\bigg)^{-1}
 \bigg(1+\cos\dfrac{2n+1}{2n+2}\pi\bigg)^2
 \times 4(-1)^n
<0,
\end{align*}
which means that 
\eqref{06eqn:interlacing S2n+1} is valid also for $k=n+1$.
Then the intermediate value theorem says that
$S_{2n+1}(x)$ has at least one zero in the interval
$(\xi_{k+1},\xi_k)$ for each $1\le k\le n+1$.
Since $S_{2n+1}(1)=0$ and $S_{2n+1}(x)$ is a polynomial of order $n+2$,
each interval $(\xi_{k+1},\xi_k)$ contains exactly one zero
for each $1\le k\le n+1$,
and every zero of $S_{2n+1}(x)$ is simple.
\end{proof}

\begin{remark}\label{03rem:1 is zero of S_n(x)}
\normalfont
Since $S_n(1)=0$ for any $n\ge0$ by
Lemmas \ref{04lem:special value of S2n(x)}
and \ref{04lem:special value of S2n+1(x)},
we see that $S_n(x)$ is factored by $x-1$ and so is $\phi_n(x)$ by $(x-1)^2$,
see Theorem \ref{04thm:factorization pf phi_n(x)}.
After routine calculus we may write down $S_n(x)/(x-1)$ in terms of Chebyshev series,
however, no concise form has been obtained yet.
\end{remark}

\subsection{Comparison of Minimal Zeroes of $S_n(x)$ and $U^{\mathrm{e}}_n(x)$}

For $n\ge2$ let $\beta_n$ and $\gamma_n$ be 
the minimal zeroes of
$U^{\mathrm{e}}_n(x)$ and $S_n(x)$, respectively.
Note that $\beta_1$ makes no sense 
because $U^{\mathrm{e}}_1(x)=1$ has no zero,
while $\gamma_1=-1/2$ by $S_1(x)=2(2x+1)(x-1)$.

\begin{lemma}\label{03lem:elementary calculus}
We have
\begin{equation}\label{03eqn:elementary inequality 0}
\frac{1-x}{1+x}\le \cos \pi x,
\qquad
0\le x \le \frac13,
\end{equation}
where equality holds only for $x=0$ and $x=1/3$.
\end{lemma}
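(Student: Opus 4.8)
The plan is to exploit the fact that the two sides of \eqref{03eqn:elementary inequality 0} agree exactly at the two endpoints together with an opposite-convexity argument in between. First I would check the boundary values: at $x=0$ both sides equal $1$, and at $x=1/3$ we have $\cos(\pi/3)=1/2$ while $(1-1/3)/(1+1/3)=1/2$, so the two sides coincide at $x=0$ and $x=1/3$. This already identifies the only candidate equality points and reduces the task to proving a strict inequality on the open interval $(0,1/3)$.

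Next I would set $u(x)=\cos\pi x$ and $v(x)=(1-x)/(1+x)$ and compare their concavity. Writing $v(x)=-1+2/(1+x)$ gives $v''(x)=4/(1+x)^{3}>0$, so $v$ is strictly convex on $[0,1/3]$. On the other hand $u''(x)=-\pi^{2}\cos\pi x$, and since $0\le x<1/3$ forces $\pi x<\pi/3<\pi/2$ and hence $\cos\pi x>0$, we get $u''(x)<0$ on $[0,1/3)$, so $u$ is strictly concave there. Thus the two sides have opposite curvature on the interval of interest.

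The key observation is then that $u$ and $v$ share the two endpoints $(0,1)$ and $(1/3,1/2)$, so the secant line joining those endpoints is the same line $L$ for both functions. Invoking the elementary fact that a strictly concave function lies strictly above any of its secants while a strictly convex function lies strictly below, I conclude that for every interior point $x\in(0,1/3)$ one has $u(x)>L(x)>v(x)$, that is $\cos\pi x>(1-x)/(1+x)$. Together with the equality at the two endpoints this gives exactly the claimed statement, including the sharpness.

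I do not expect a real obstacle here: the proof reduces to a one-line verification of the endpoint values and of the signs of $u''$ and $v''$, plus the standard secant-comparison lemma for convex and concave functions. The only point requiring slight care is that the strict inequality is read off on the open interval $(0,1/3)$, so it suffices to have $u''<0$ strictly there; the closed-interval concavity at the left endpoint is not needed.
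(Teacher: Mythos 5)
Your proposal is correct and takes essentially the same route as the paper: the paper's proof is just the one-line observation that $(1-x)/(1+x)$ is convex on $(0,1/3)$ while $\cos\pi x$ is concave there, with the endpoint agreement and secant comparison left implicit. Your write-up simply fills in those details (endpoint values, signs of the second derivatives, and the chord argument), all of which check out.
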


\begin{proof}
By elementary calculus.
For example, we observe
that the left-hand side of \eqref{03eqn:elementary inequality 0}
is convex on the interval $(0,1/3)$ while the right-hand side
is concave thereon.
\end{proof}

\begin{theorem}\label{03thm:comparison beta and gamma}
We have
\begin{gather}
\beta_2=\gamma_2=-\frac12=\cos\frac{2}{3}\pi,
\label{03eqn:alpha and beta for n=2} 
\\
-1<\gamma_{2n+1}<\cos\frac{2n+1}{2n+2}\pi
<\beta_{2n+1}=\cos\frac{2n}{2n+2}\pi,
\qquad n\ge1,
\label{03eqn:alpha and beta for n odd}
\\
-1<\beta_{2n}=\cos\frac{2n}{2n+1}\pi
<\gamma_{2n}<\cos\frac{2n-1}{2n+1}\pi,
\qquad n\ge2.
\label{03eqn:alpha and beta for n even}
\end{gather}
\end{theorem}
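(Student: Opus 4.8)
The plan is to treat the even and odd cases separately, combining the explicit values of $\beta_n$ from \eqref{03eqn:beta_n} with the localization of the minimal zero $\gamma_n$ of $S_n(x)$ furnished by Theorems \ref{04thm:roots of S2n(x)} and \ref{06thm:roots of S2n+1(x)}. The case \eqref{03eqn:alpha and beta for n=2} will emerge as a boundary instance of the even-case computation, so I would defer it.

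For the odd case \eqref{03eqn:alpha and beta for n odd} the argument is immediate and uses no further input. By Theorem \ref{06thm:roots of S2n+1(x)} the minimal zero $\gamma_{2n+1}$ lies in $(\xi_{n+2},\xi_{n+1})=(-1,\cos\frac{2n+1}{2n+2}\pi)$, which already supplies $-1<\gamma_{2n+1}<\cos\frac{2n+1}{2n+2}\pi$. Since $\beta_{2n+1}=\cos\frac{2n}{2n+2}\pi$ by \eqref{03eqn:beta_n} and $\frac{2n+1}{2n+2}>\frac{2n}{2n+2}$, the monotonicity of cosine on $[0,\pi]$ gives $\cos\frac{2n+1}{2n+2}\pi<\beta_{2n+1}$, closing the chain.

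For the even case \eqref{03eqn:alpha and beta for n even} the upper bound $\gamma_{2n}<\cos\frac{2n-1}{2n+1}\pi$ is immediate, since Theorem \ref{04thm:roots of S2n(x)} places $\gamma_{2n}$ in $(\xi_{n+1},\xi_n)=(-1,\cos\frac{2n-1}{2n+1}\pi)$. The crux is the lower bound $\beta_{2n}<\gamma_{2n}$, which I would obtain by a sign argument. First I would evaluate $S_{2n}$ at the explicit point $\beta_{2n}=\cos\frac{2n}{2n+1}\pi$ via \eqref{04eqn:special values of Un-1 even 2k} with $k=n$; after the reduction $\cos\frac{2n\pi}{2n+1}=-\cos\frac{\pi}{2n+1}$ this becomes
\[
S_{2n}(\beta_{2n})=2(-1)^n\Big(\cos\frac{n\pi}{2n+1}\Big)^{-1}\Big(n-(n+1)\cos\frac{\pi}{2n+1}\Big).
\]
As $\cos\frac{n\pi}{2n+1}>0$, the sign is controlled by the bracket $n-(n+1)\cos\frac{\pi}{2n+1}$, and here Lemma \ref{03lem:elementary calculus} enters decisively: putting $x=\frac{1}{2n+1}$ one checks $\frac{1-x}{1+x}=\frac{n}{n+1}$, so the lemma gives $\frac{n}{n+1}\le\cos\frac{\pi}{2n+1}$ with equality precisely at $x=\frac13$, that is $n=1$. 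Hence for $n\ge2$ the bracket is strictly negative and $S_{2n}(\beta_{2n})$ has sign $(-1)^{n+1}$.

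To conclude I would compare this with the left-endpoint sign: by \eqref{04eqn:S2n(1) and S2n(-1)} we have $S_{2n}(-1)=2(-1)^{n-1}(2n+1)$, again of sign $(-1)^{n+1}$. Because $\gamma_{2n}$ is the \emph{unique} zero of $S_{2n}$ in $(-1,\cos\frac{2n-1}{2n+1}\pi)$, the sign of $S_{2n}$ is constant on $(-1,\gamma_{2n})$; since $\beta_{2n}$ lies in this interval and shares the endpoint sign $(-1)^{n+1}$, it must sit to the left of $\gamma_{2n}$, giving $\beta_{2n}<\gamma_{2n}$ for $n\ge2$. The coincidence \eqref{03eqn:alpha and beta for n=2} then falls out of the same computation at $n=1$: the lemma yields equality, so $S_2(\beta_2)=0$ forces $\gamma_2=\beta_2$, and \eqref{03eqn:beta_n} identifies the common value as $\cos\frac23\pi=-\frac12$. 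The main obstacle is this sign bookkeeping that ties the explicit value $S_{2n}(\beta_{2n})$ to the position of $\gamma_{2n}$; the clean identity $\frac{1-x}{1+x}=\frac{n}{n+1}$ at $x=\frac{1}{2n+1}$ is exactly what makes Lemma \ref{03lem:elementary calculus} the right tool, and it is no accident that its equality case $x=\frac13$ corresponds precisely to $n=1$.
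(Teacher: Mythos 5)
Your proposal is correct and follows essentially the same route as the paper: the odd case \eqref{03eqn:alpha and beta for n odd} read off from Theorem \ref{06thm:roots of S2n+1(x)}, and the even case \eqref{03eqn:alpha and beta for n even} obtained by evaluating $S_{2n}$ at $\beta_{2n}=\cos\frac{2n}{2n+1}\pi$ via Lemma \ref{04lem:special value of S2n(x)}, applying Lemma \ref{03lem:elementary calculus} at $x=\frac{1}{2n+1}$, and then locating $\gamma_{2n}$ through the uniqueness statement of Theorem \ref{04thm:roots of S2n(x)}. The only cosmetic differences are that the paper traps $\gamma_{2n}$ in $\left(\beta_{2n},\cos\frac{2n-1}{2n+1}\pi\right)$ by an intermediate-value argument on the product of the values at the two endpoints, whereas you compare the sign of $S_{2n}(\beta_{2n})$ with that of $S_{2n}(-1)$ and invoke simplicity of the unique zero, and that the paper obtains $\gamma_2=-\frac12$ from the explicit factorization $S_2(x)=3(2x+1)(x-1)$ rather than from the equality case of the lemma at $n=1$.
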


\begin{proof}
For $n\ge2$ the explicit value of $\beta_n$ is already mentioned 
in \eqref{03eqn:beta_n}.
Then for \eqref{03eqn:alpha and beta for n=2} 
we need only to note that $\gamma_2=-1/2$,
which follows from $S_2(x)=3(2x+1)(x-1)$.
Moreover, it follows from Theorem \ref{06thm:roots of S2n+1(x)} that
\begin{equation}\label{06eqn:minimal zeros of S2n+1(x)}
-1<\gamma_{2n+1}<\cos\frac{2n+1}{2n+2}\pi,
\qquad n\ge0,
\end{equation}
from which we obtain \eqref{03eqn:alpha and beta for n odd}.

For \eqref{03eqn:alpha and beta for n even}
we need to compare $\beta_{2n}$ and $\gamma_{2n}$ for $n\ge2$.
In view of Lemma \ref{04lem:special value of S2n(x)}
we have
\begin{align}
S_{2n}\left(\cos\frac{2n\pi}{2n+1}\right)
&=2(-1)^n \left(\cos\dfrac{n\pi}{2n+1}\right)^{-1}
 \left(n+(n+1)\cos\dfrac{2n\pi}{2n+1}\right),
\label{03eqn:in proof Thm 3.9 (1)}
\\
S_{2n}\left(\cos\frac{2n-1}{2n+1}\pi\right)
&=(-1)^n\left(\sin\dfrac{2n-1}{2(2n+1)}\pi\right)^{-1}
 \left(1+\cos\dfrac{2n-1}{2n+1}\pi\right).
\label{03eqn:in proof Thm 3.9 (2)}
\end{align}
Hence the signature of the product
of \eqref{03eqn:in proof Thm 3.9 (1)}
and \eqref{03eqn:in proof Thm 3.9 (2)}
coincides with that of
\begin{equation}\label{04eqn:in proof Theorem 3.11 (2)}
n+(n+1)\cos\dfrac{2n\pi}{2n+1}
=n-(n+1)\cos\dfrac{\pi}{2n+1}\,.
\end{equation}
Setting $x=1/(2n+1)$ in Lemma \ref{03lem:elementary calculus},
we see that \eqref{04eqn:in proof Theorem 3.11 (2)} is
negative for any $n\ge2$,
and so is the product of \eqref{03eqn:in proof Thm 3.9 (1)}
and \eqref{03eqn:in proof Thm 3.9 (2)}.
Then by the intermediate value theorem 
there is a zero of $S_{2n}(x)$ in the interval 
$(\cos 2n\pi/(2n+1), \cos((2n-1)\pi)/(2n+1))$.
Indeed, this zero is $\gamma_{2n}$
as Theorem \ref{04thm:roots of S2n(x)} says that
there is a unique zero of $S_{2n}(x)$ in the interval 
$(-1, \cos((2n-1)\pi)/(2n+1))$.
Thus, combining \eqref{03eqn:beta_n} we 
obtain \eqref{03eqn:alpha and beta for n even}.
\end{proof}

\subsection{Minimal Zero of $\phi_n(x)$}

Recall that the minimal zeroes of
$\phi_n(x)=(x-1)\cdot U^{\mathrm{e}}_n(x)\cdot S_n(x)$,
$U^{\mathrm{e}}_n(x)$ and $S_n(x)$
are denoted by
$\alpha_n$, $\beta_n$ and $\gamma_n$, respectively.
By Remark \ref{03rem:1 is zero of S_n(x)} 
any zero of $\phi_n(x)$ is a zero of 
$U^{\mathrm{e}}_n(x)$ or of $S_n(x)$.
We obtain $\alpha_0=1$ from $\phi_0(x)=(x-1)^2$ and
$\alpha_1=-1/2$ from $\phi_1(x)=2(2x+1)(x-1)^2$.
Then the following result is immediate from
Theorem \ref{03thm:comparison beta and gamma}.

\begin{theorem}\label{04thm:alpha_n minimal zero of phi_n}
We have $\alpha_0=1$, $\alpha_1=-1/2$ and
\begin{equation}\label{03eqn:alpha_n, beta_n, gamma_n}
\alpha_n=\min\{\beta_n,\gamma_n\}
=\begin{cases}
\beta_n &\text{for an even $n\ge2$}, \\
\gamma_n &\text{for an odd $n\ge3$}.
\end{cases}
\end{equation}
\end{theorem}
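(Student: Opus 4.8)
The plan is to read off the minimal zero of $\phi_n$ directly from the factorization $\phi_n(x)=(x-1)\cdot U^{\mathrm{e}}_n(x)\cdot S_n(x)$ established in Theorem~\ref{04thm:factorization pf phi_n(x)}, and then to resolve the competition between the two nontrivial factors using Theorem~\ref{03thm:comparison beta and gamma}. First I would dispose of the base cases $n=0,1$ by the explicit expressions $\phi_0(x)=(x-1)^2$ and $\phi_1(x)=2(2x+1)(x-1)^2$ recorded above, which give $\alpha_0=1$ and $\alpha_1=-1/2$.

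For $n\ge2$ the set of real zeroes of $\phi_n$ is the union of $\{1\}$, the zeroes of $U^{\mathrm{e}}_n$, and the zeroes of $S_n$, so its minimum is $\min\{1,\beta_n,\gamma_n\}$. By Theorem~\ref{02thm:product expressions} every zero of $U^{\mathrm{e}}_n$ has the form $\cos(2l\pi/(n+1))$ and hence lies in $(-1,1)$, so $\beta_n<1$; and by Theorems~\ref{04thm:roots of S2n(x)} and~\ref{06thm:roots of S2n+1(x)} the zeroes of $S_n$ other than $x=1$ lie in $(-1,1)$, so $\gamma_n<1$. Thus the factor $(x-1)$ never attains the minimum, and $\alpha_n=\min\{\beta_n,\gamma_n\}$.

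Finally I would invoke Theorem~\ref{03thm:comparison beta and gamma} to decide this minimum by parity. For even $n$ the case $n=2$ gives the tie $\beta_2=\gamma_2=-1/2$, while for even $n\ge4$ the theorem yields $\beta_n<\gamma_n$; in either case $\alpha_n=\beta_n$. For odd $n\ge3$ the theorem yields $\gamma_n<\beta_n$, so $\alpha_n=\gamma_n$, matching the claimed formula. Since the assertion is essentially a corollary of the two cited theorems, there is no genuine obstacle here: the only points requiring attention are the verification that $\beta_n,\gamma_n<1$, so that $x=1$ is irrelevant to the minimum, and the observation that the $n=2$ tie still produces $\beta_n$, all the substantive work having been carried out in Theorems~\ref{04thm:factorization pf phi_n(x)} and~\ref{03thm:comparison beta and gamma}.
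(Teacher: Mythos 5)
Your proposal is correct and follows essentially the same route as the paper: read off the zero set from the factorization $\phi_n(x)=(x-1)\,U^{\mathrm{e}}_n(x)\,S_n(x)$, settle $n=0,1$ explicitly, and decide $\min\{\beta_n,\gamma_n\}$ by parity via Theorem~\ref{03thm:comparison beta and gamma}. The only cosmetic difference is how $x=1$ is dismissed: the paper invokes Remark~\ref{03rem:1 is zero of S_n(x)} (since $S_n(1)=0$, the zeroes of $\phi_n$ are exactly those of $U^{\mathrm{e}}_n$ and $S_n$), whereas you check $\beta_n,\gamma_n<1$ directly -- both are fine.
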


\begin{lemma}\label{03lem:S_2n+1(beta_2n+2)}
For $n\ge1$ we have
\begin{equation}\label{03eqn:S_2n+1(cos 2n+2/2n+3)}
S_{2n+1}\bigg(\cos\dfrac{2n+2}{2n+3}\pi\bigg)
=4(-1)^n(n+2)\sin\frac{\pi}{2(2n+3)}
\bigg(\cos\frac{\pi}{2n+3}-\frac{n+1}{n+2}\bigg).
\end{equation}
\end{lemma}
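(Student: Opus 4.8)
The plan is to prove \eqref{03eqn:S_2n+1(cos 2n+2/2n+3)} by a direct evaluation, inserting the explicit argument into the definition \eqref{04eqn:def of S_2n+1} of $S_{2n+1}(x)$. I would abbreviate $\phi=\pi/(2n+3)$, $c=\cos\phi$, $s=\sin\phi$, and note that the argument equals $\cos\frac{2n+2}{2n+3}\pi=\cos(\pi-\phi)=-c$. This point sits at ``level $2n+3$'' rather than the natural level $2n+2$ of $S_{2n+1}$, so the special-value formulas of Lemma~\ref{04lem:special value of S2n+1(x)} do not apply and the values of $U_n$ and $U_{n-1}$ must be recomputed. Using the parity $U_m(-x)=(-1)^mU_m(x)$ together with \eqref{01eqn:def of U_n(x)}, one has
\[
U_n(-c)=\frac{(-1)^n\sin((n+1)\phi)}{s},
\qquad
U_{n-1}(-c)=\frac{(-1)^{n+1}\sin(n\phi)}{s}.
\]

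Substituting these and $x=-c$ (so $x^2=c^2$) into \eqref{04eqn:def of S_2n+1} gathers everything into
\[
S_{2n+1}(-c)=\frac{2(-1)^n}{s}\Bigl\{[(2n+2)c^2-(2n-1)c-1]\sin((n+1)\phi)+[(2n+1)-(2n+3)c]\sin(n\phi)\Bigr\}.
\]
The crux is to collapse the two sine terms into the single half-angle factor on the right of \eqref{03eqn:S_2n+1(cos 2n+2/2n+3)}. The decisive arithmetic fact is $(2n+3)\phi=\pi$, which gives $n\phi+(n+1)\phi=\pi-2\phi$ and $(n+1)\phi-n\phi=\phi$; sum-to-product then yields
\[
\sin((n+1)\phi)=c\cos\tfrac{\phi}{2}+s\sin\tfrac{\phi}{2},
\qquad
\sin(n\phi)=c\cos\tfrac{\phi}{2}-s\sin\tfrac{\phi}{2}.
\]

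Substituting these and separating by $\cos\frac{\phi}{2}$ and $\sin\frac{\phi}{2}$, the sum of the two polynomial coefficients produces $2(c-1)((n+1)c-n)$ on the $c\cos\frac{\phi}{2}$ side, which is transparent once one spots $(n+1)c^2-(2n+1)c+n=(c-1)((n+1)c-n)$; meanwhile the difference of the two coefficients, compared against the target coefficient $2[(n+2)c-(n+1)]$ of $s\sin\frac{\phi}{2}$, leaves precisely the surplus $-2c((n+1)c-n)$. Matching the claimed right-hand side (after writing $2s\sin\frac{\phi}{2}[(n+2)c-(n+1)]$ in the stated form) therefore reduces to $c(c-1)((n+1)c-n)\cos\frac{\phi}{2}=-c((n+1)c-n)\,s\sin\frac{\phi}{2}$, and after cancelling the common factor $c((n+1)c-n)$ this is exactly the elementary half-angle identity $(\cos\phi-1)\cos\frac{\phi}{2}=-\sin\phi\sin\frac{\phi}{2}$. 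I expect the main obstacle to be nothing deeper than the algebraic bookkeeping in this grouping-and-factoring step; the one genuine idea is the relation $(2n+1)\phi=\pi-2\phi$ that merges the two sines into clean half-angle expressions, after which the polynomial coefficients factor as needed.
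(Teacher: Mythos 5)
Your proof is correct and takes essentially the same route as the paper: the paper likewise evaluates $U_n$ and $U_{n-1}$ at $\cos\frac{2n+2}{2n+3}\pi$ directly from the trigonometric definition (writing, with $\omega=\pi/(2(2n+3))=\phi/2$, the values $(-1)^n\cos\omega/\sin 2\omega$ and $(-1)^{n+1}\cos 3\omega/\sin 2\omega$, which are exactly your $(-1)^n\sin((n+1)\phi)/s$ and $(-1)^{n+1}\sin(n\phi)/s$), substitutes into the definition of $S_{2n+1}$, and simplifies by ``direct calculation.'' You have simply carried out explicitly the half-angle and factoring bookkeeping that the paper leaves to the reader.
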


\begin{proof}
In an analogous manner as in Lemma \ref{04lem:special value of S2n+1(x)},
we obtain \eqref{03eqn:S_2n+1(cos 2n+2/2n+3)} by
direct calculation using 
\[
U_{n-1}\bigg(\cos\dfrac{2n+2}{2n+3}\pi\bigg)
=(-1)^{n+1} \frac{\cos 3\omega}{\sin 2\omega}\,,
\qquad
U_n\bigg(\cos\dfrac{2n+2}{2n+3}\pi\bigg)
=(-1)^n \frac{\cos \omega}{\sin 2\omega}\,,
\]
where $\omega=\pi/(2(2n+3))$.
\end{proof}

\begin{lemma}\label{04lem:comparison of beta and gamma}
For $n\ge1$ we have
\begin{equation}\label{03eqn:beta<gamma<beta}
\beta_{2n+2}<\gamma_{2n+1}<\beta_{2n}\,,
\qquad n\ge1.
\end{equation}
In particular,
\begin{equation}\label{03eqn:alpha_n strict decrease}
1=\alpha_0>\alpha_1=\alpha_2=-\frac12
>\alpha_3>\alpha_4>\alpha_5>\cdots\rightarrow -1.
\end{equation}
\end{lemma}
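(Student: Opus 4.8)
The plan is to establish the two-sided bound $\beta_{2n+2}<\gamma_{2n+1}<\beta_{2n}$ first and then read off the chain \eqref{03eqn:alpha_n strict decrease} via Theorem \ref{04thm:alpha_n minimal zero of phi_n}. The upper bound $\gamma_{2n+1}<\beta_{2n}$ is the easy half. Theorem \ref{03thm:comparison beta and gamma} already supplies $\gamma_{2n+1}<\cos\frac{2n+1}{2n+2}\pi$, so it remains only to compare this with $\beta_{2n}=\cos\frac{2n}{2n+1}\pi$ from \eqref{03eqn:beta_n}. Since $\frac{2n+1}{2n+2}-\frac{2n}{2n+1}=\frac{1}{(2n+1)(2n+2)}>0$ and cosine is decreasing on $[0,\pi]$, one gets $\cos\frac{2n+1}{2n+2}\pi<\cos\frac{2n}{2n+1}\pi=\beta_{2n}$, closing the upper bound.

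The lower bound $\beta_{2n+2}<\gamma_{2n+1}$ is the main obstacle and will be handled by a sign argument. By \eqref{03eqn:beta_n} we have $\beta_{2n+2}=\cos\frac{2n+2}{2n+3}\pi$, and the same angle comparison (now $\frac{2n+2}{2n+3}-\frac{2n+1}{2n+2}=\frac{1}{(2n+2)(2n+3)}>0$) shows $-1<\beta_{2n+2}<\cos\frac{2n+1}{2n+2}\pi$; thus $\beta_{2n+2}$ lies in the interval $\bigl(-1,\cos\frac{2n+1}{2n+2}\pi\bigr)$, which by Theorem \ref{06thm:roots of S2n+1(x)} contains precisely one zero of $S_{2n+1}$, namely $\gamma_{2n+1}$. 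The strategy is to locate $\gamma_{2n+1}$ relative to $\beta_{2n+2}$ by comparing the sign of $S_{2n+1}$ at $x=-1$ and at $x=\beta_{2n+2}$: because $S_{2n+1}$ has a single zero in this interval, agreement of the two signs forces that zero to lie above the interior point $\beta_{2n+2}$, giving $\gamma_{2n+1}>\beta_{2n+2}$. Now $S_{2n+1}(-1)=4(-1)^n$ by \eqref{06eqn:S2n+1(1) and S2n+1(-1)}, while Lemma \ref{03lem:S_2n+1(beta_2n+2)} shows that the sign of $S_{2n+1}(\beta_{2n+2})$ equals $(-1)^n$ times the sign of $\cos\frac{\pi}{2n+3}-\frac{n+1}{n+2}$. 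The crux is therefore the strict inequality $\cos\frac{\pi}{2n+3}>\frac{n+1}{n+2}$ for $n\ge1$, which I obtain from Lemma \ref{03lem:elementary calculus} applied with $x=\frac{1}{2n+3}$ (admissible since $0<x\le\frac15<\frac13$): the lower bound $\frac{1-x}{1+x}$ simplifies exactly to $\frac{n+1}{n+2}$, and strictness holds because $x\neq 0,\frac13$. Consequently $S_{2n+1}(\beta_{2n+2})$ and $S_{2n+1}(-1)$ share the sign $(-1)^n$, and the sign argument yields $\gamma_{2n+1}>\beta_{2n+2}$.

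With $\beta_{2n+2}<\gamma_{2n+1}<\beta_{2n}$ established, the chain \eqref{03eqn:alpha_n strict decrease} follows. By Theorem \ref{04thm:alpha_n minimal zero of phi_n} we have $\alpha_{2n}=\beta_{2n}$ and $\alpha_{2n+1}=\gamma_{2n+1}$ for $n\ge1$, together with the base values $\alpha_0=1$ and $\alpha_1=\alpha_2=-\frac12$ (the last using $\beta_2=-\frac12$ from Theorem \ref{03thm:comparison beta and gamma}). Rewriting the two-sided bound as $\alpha_{2n+2}<\alpha_{2n+1}<\alpha_{2n}$ and stitching these across $n\ge1$ produces the strict decrease $\alpha_2>\alpha_3>\alpha_4>\cdots$, and prepending $\alpha_0>\alpha_1=\alpha_2$ completes the displayed inequalities. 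For the limit I note that $\alpha_{2n}=\cos\frac{2n}{2n+1}\pi\to\cos\pi=-1$; since each odd term is squeezed, $\alpha_{2n+2}<\alpha_{2n+1}<\alpha_{2n}$, between two consecutive even terms both tending to $-1$, the whole sequence converges to $-1$, as claimed.
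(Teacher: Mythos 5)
Your proof is correct and follows the same overall strategy as the paper: the easy half $\gamma_{2n+1}<\beta_{2n}$ via Theorem \ref{03thm:comparison beta and gamma} and monotonicity of cosine, and the hard half $\beta_{2n+2}<\gamma_{2n+1}$ by reducing everything to the sign of $S_{2n+1}(\beta_{2n+2})$, computed from Lemma \ref{03lem:S_2n+1(beta_2n+2)} together with Lemma \ref{03lem:elementary calculus} at $x=\tfrac{1}{2n+3}$ --- your crux inequality $\cos\frac{\pi}{2n+3}>\frac{n+1}{n+2}$ is exactly the paper's. The one step where you genuinely deviate is how that sign is converted into the location of $\gamma_{2n+1}$: the paper determines the sign pattern of $S_{2n+1}$ on $(-\infty,\gamma_{2n+1})$ and on $(\gamma_{2n+1},\delta_{2n+1})$ from the positive leading coefficient and the parity of the degree (which forces a case-split into $n$ odd and $n$ even, and requires introducing the second smallest zero $\delta_{2n+1}$ and checking $\beta_{2n+2}<\delta_{2n+1}$), whereas you compare $S_{2n+1}(\beta_{2n+2})$ with the endpoint value $S_{2n+1}(-1)=4(-1)^n$ from Lemma \ref{04lem:special value of S2n+1(x)} and invoke the uniqueness and simplicity of the zero of $S_{2n+1}$ in $\bigl(-1,\cos\frac{2n+1}{2n+2}\pi\bigr)$ guaranteed by Theorem \ref{06thm:roots of S2n+1(x)}. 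Your variant is a little cleaner: no parity case-split, no $\delta_{2n+1}$, at the cost of using the value at $-1$ (which the paper proves anyway). Both arguments are complete; your derivation of \eqref{03eqn:alpha_n strict decrease}, including the squeeze of the odd-indexed terms between consecutive even-indexed ones to get the limit $-1$, fills in what the paper dismisses as following easily from Theorem \ref{04thm:alpha_n minimal zero of phi_n}.
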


\begin{proof}
The right-half of \eqref{03eqn:beta<gamma<beta} follows immediately
from Theorem \ref{03thm:comparison beta and gamma} by
observing that
\begin{equation}\label{06eqn:in proof Thm 6.18(1)}
\gamma_{2n+1}<\cos\frac{2n+1}{2n+2}\pi
<\cos\frac{2n}{2n+1}\pi=\beta_{2n}\,.
\end{equation}
The left-half of \eqref{03eqn:beta<gamma<beta} requires
more careful consideration.
Let $\delta_{2n+1}$ be the second smallest zero of
$S_{2n+1}(x)$, while the minimal one has been denoted by $\gamma_{2n+1}$.
It follows from Theorem \ref{06thm:roots of S2n+1(x)} that
\begin{equation}\label{03eqn:in proof Thm 3.10 (1)}
-1<\gamma_{2n+1}<\cos\frac{2n+1}{2n+2}\pi
<\delta_{2n+1}<\cos\frac{2n-1}{2n+2}\pi.
\end{equation}
Since $S_{2n+1}(x)$ is a polynomial of order $n+2$,
the coefficient of the leading term is positive
and every zero of it is simple,
we have
\begin{align}
&S_{2n+1}(x)<0 \,\,\text{for $x<\gamma_{2n+1}$},
\quad
S_{2n+1}(x)>0 \,\, \text{for $\gamma_{2n+1}<x<\delta_{2n+1}$},
\quad \text{if $n\ge1$ is odd},
\label{03eqn:S_{2n+1} for odd n} \\
&S_{2n+1}(x)>0 \,\, \text{for $x<\gamma_{2n+1}$},
\quad
S_{2n+1}(x)<0 \,\, \text{for $\gamma_{2n+1}<x<\delta_{2n+1}$},
\quad \text{if $n\ge2$ is even}.
\label{03eqn:S_{2n+1} for even n}
\end{align}
On the other hand, using the exact value of $\beta_{2n+2}$ in
Theorem \ref{03thm:comparison beta and gamma} and
combining \eqref{03eqn:in proof Thm 3.10 (1)}, we obtain
\begin{equation}\label{07eqn:beta_2n+2<r}
\beta_{2n+2}=\cos\frac{2n+2}{2n+3}\pi
<\frac{2n+1}{2n+2}\pi<\delta_{2n+1},
\end{equation}
Therefore, for $\beta_{2n+2}<\gamma_{2n+1}$ it is sufficient to
show that $S_{2n+1}(\beta_{2n+2})<0$ or
$S_{2n+1}(\beta_{2n+2})>0$ according as
$n\ge1$ is odd or $n\ge2$ is even,
see \eqref{03eqn:S_{2n+1} for odd n}
and \eqref{03eqn:S_{2n+1} for even n}.
Thus, it is sufficient to show that
\begin{equation}\label{03eqn:in proof Thm 3.10 (7)}
(-1)^n S_{2n+1}\bigg(\cos\dfrac{2n+2}{2n+3}\pi\bigg)>0,
\qquad n\ge1.
\end{equation}
Indeed, \eqref{03eqn:in proof Thm 3.10 (7)} follows from
Lemmas \ref{03lem:S_2n+1(beta_2n+2)} and
\ref{03lem:elementary calculus}.
Finally, \eqref{03eqn:alpha_n strict decrease} follows easily
by Theorem \ref{04thm:alpha_n minimal zero of phi_n}.
\end{proof}

\subsection{Proofs of Main Results}

We start with the evaluation of $\sigma_n$ in
Lemma \ref{03lem:preparatory formula for sigma_n}.

\begin{lemma}\label{04lem:sigma_n}
For $n\ge3$ we have $\sigma_n=2\gamma_n$.
Moreover, we have 
\begin{gather}
\sigma_{2n+1}<\omega^{(2n+1)}_{2n+1}<\omega^{(2n+1)}_{2n},
\qquad n\ge1, 
\label{04eqn:sigma_2n+1 estimate} \\
\omega^{(2n)}_{2n}<\sigma_{2n}<\omega^{(2n)}_{2n-1},
\qquad n\ge2,
\label{04eqn:sigma_2n estimate}
\end{gather}
where $\omega^{(n)}_k$ is the $k$th largest eigenvalue
of $A_n$ as described in \eqref{04eqn:ev(A_n)}.
\end{lemma}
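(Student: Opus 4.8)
The plan is to prove Lemma \ref{04lem:sigma_n} by first identifying $\sigma_n$ with $2\gamma_n$ and then locating $2\gamma_n$ relative to the eigenvalues of $A_n$. Recall from Lemma \ref{03lem:preparatory formula for sigma_n} that $\sigma_n$ is the minimal $\alpha\in(-\infty,-1)\setminus\mathrm{ev}(A_n)$ with $\phi_n(\alpha/2)=0$. By the factorization $\phi_n(x)=(x-1)U^{\mathrm{e}}_n(x)S_n(x)$ of Theorem \ref{04thm:factorization pf phi_n(x)}, together with Theorem \ref{04thm:alpha_n minimal zero of phi_n}, the global minimal zero of $\phi_n$ is $\alpha_n=\min\{\beta_n,\gamma_n\}$. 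The key observation is that the zeroes $\beta_n$ of $U^{\mathrm{e}}_n$ are precisely the even-indexed values $\cos(2l\pi/(n+1))$, which by \eqref{04eqn:ev(A_n)} are (halves of) eigenvalues of $A_n$; these are excluded from the admissible set for $\sigma_n$. Hence $\sigma_n$ must come from a zero of $S_n$, and the first step is to argue that the relevant minimal zero is $\gamma_n$ itself, giving $\sigma_n=2\gamma_n$.

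The delicate point is confirming that $\gamma_n$ is genuinely admissible, i.e.\ that $\gamma_n\in(-\infty,-1)\setminus\mathrm{ev}(A_n)$ and that no zero of $S_n$ below $\gamma_n$ interferes. First I would check $2\gamma_n<-1$, i.e.\ $\gamma_n<-1/2$: for even $n$ this is immediate since $\gamma_{2n}<\beta_{2n}$ is false---rather $\beta_{2n}<\gamma_{2n}$ by \eqref{03eqn:alpha and beta for n even}, so one uses instead the explicit bound $\gamma_{2n}<\cos((2n-1)\pi/(2n+1))$ and verifies this is $<-1/2$ for $n\ge2$; for odd $n$ one uses $\gamma_{2n+1}<\cos((2n+1)\pi/(2n+2))<-1/2$. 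Second, since $\gamma_n$ is a simple zero of $S_n$ and $S_n$ shares no zeroes with $U^{\mathrm{e}}_n$ in the relevant range (the $\beta_n$ are zeroes of $U^{\mathrm{e}}_n$, not of $S_n$, as they do not satisfy the admissibility needed), I would confirm $2\gamma_n\notin\mathrm{ev}(A_n)$ by noting $\gamma_n$ is strictly between consecutive eigenvalue-halves. This separation is exactly what Theorems \ref{04thm:roots of S2n(x)} and \ref{06thm:roots of S2n+1(x)} provide, since those results place each zero of $S_n$ strictly inside an open interval whose endpoints are cosines of the form $\cos((2k-1)\pi/(n+1))$.

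For the two-sided estimates \eqref{04eqn:sigma_2n+1 estimate} and \eqref{04eqn:sigma_2n estimate}, the plan is to translate the interval-localization of $\gamma_n$ into statements about $\omega^{(n)}_k=2\cos(k\pi/(n+1))$. In the odd case, \eqref{06eqn:minimal zeros of S2n+1(x)} gives $-1<\gamma_{2n+1}<\cos((2n+1)\pi/(2n+2))$; doubling yields $\sigma_{2n+1}=2\gamma_{2n+1}<2\cos((2n+1)\pi/(2n+2))=\omega^{(2n+1)}_{2n+1}$, and $\omega^{(2n+1)}_{2n+1}<\omega^{(2n+1)}_{2n}$ is just the ordering of eigenvalues. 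In the even case, Theorem \ref{03thm:comparison beta and gamma} localizes $\gamma_{2n}$ in $(\cos(2n\pi/(2n+1)),\cos((2n-1)\pi/(2n+1)))$; doubling gives $\omega^{(2n)}_{2n}=2\cos(2n\pi/(2n+1))<\sigma_{2n}<2\cos((2n-1)\pi/(2n+1))=\omega^{(2n)}_{2n-1}$, which is exactly \eqref{04eqn:sigma_2n estimate}.

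The main obstacle I anticipate is rigorously excluding any spurious zero of $\phi_n$ lying strictly below $\gamma_n$ in $(-\infty,-1)$ that might undercut the claimed value of $\sigma_n$. Concretely, one must rule out that $\beta_n<\gamma_n$ produces a smaller admissible candidate---but since $\beta_n\in\mathrm{ev}(A_n)/2$ is inadmissible for $\sigma_n$ by construction (Case I explicitly removes $\mathrm{ev}(A_n)$), the only true competitors are other zeroes of $S_n$, and Theorems \ref{04thm:roots of S2n(x)} and \ref{06thm:roots of S2n+1(x)} guarantee $\gamma_n$ is the smallest of these. The verification that $\gamma_n$ lands strictly inside an eigenvalue gap (not on an eigenvalue) is where Lemma \ref{03lem:elementary calculus} does the quantitative work, as in the proof of Theorem \ref{03thm:comparison beta and gamma}, and is the step requiring the most care.
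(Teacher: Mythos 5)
Your route is the paper's route: factor $\phi_n(x)=(x-1)\,U^{\mathrm{e}}_n(x)\,S_n(x)$, discard the zeroes of $U^{\mathrm{e}}_n$ because they are halves of eigenvalues of $A_n$, identify $\sigma_n=2\gamma_n$, and obtain \eqref{04eqn:sigma_2n+1 estimate}--\eqref{04eqn:sigma_2n estimate} by doubling the localization of $\gamma_n$ coming from Theorem \ref{06thm:roots of S2n+1(x)} (odd case) and Theorem \ref{03thm:comparison beta and gamma} (even case); your third paragraph is essentially verbatim the paper's argument. However, two of your supporting steps fail as stated. First, the admissibility check for even $n$: you claim $\gamma_{2n}<\cos\frac{2n-1}{2n+1}\pi<-\frac12$ for all $n\ge2$, but $\cos\frac{3}{5}\pi=\frac{1-\sqrt{5}}{4}\approx-0.309>-\frac12$; the inequality $\frac{2n-1}{2n+1}>\frac23$ holds only for $n\ge3$. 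So for $n=2$ your argument does not show $2\gamma_4<-1$, i.e.\ that $2\gamma_4$ lies in the region $(-\infty,-1)$ over which $\sigma_4$ is defined, and this case needs a separate argument (for instance $S_4(x)=(x-1)(20x^2+18x+3)$, whence $\gamma_4=\frac{-9-\sqrt{21}}{20}\approx-0.679<-\frac12$). It is worth noting that the paper's own proof is silent on this admissibility point; your instinct to verify it is sound, but the verification you propose is the step that breaks.

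Second, in your second paragraph you assert that $2\gamma_n\notin\mathrm{ev}(A_n)$ already follows from Theorems \ref{04thm:roots of S2n(x)} and \ref{06thm:roots of S2n+1(x)}, because they place $\gamma_n$ strictly inside an interval whose endpoints are odd-indexed cosines $\cos\frac{2k-1}{n+1}\pi$. For even $n$ this is not sufficient: the even-indexed cosines $\cos\frac{2k}{n+1}\pi$ --- which are exactly the halved eigenvalues you must avoid --- lie in the interior of those intervals. In particular, the interval $\bigl(-1,\cos\frac{2n-1}{2n+1}\pi\bigr)$ containing $\gamma_{2n}$ also contains $\beta_{2n}=\cos\frac{2n}{2n+1}\pi$, half of the smallest eigenvalue of $A_{2n}$. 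Ruling out $\gamma_{2n}=\beta_{2n}$, i.e.\ pinning $\gamma_{2n}$ strictly between the two smallest eigenvalue halves, is precisely what \eqref{03eqn:alpha and beta for n even} of Theorem \ref{03thm:comparison beta and gamma} provides, and its proof is where Lemmas \ref{04lem:special value of S2n(x)} and \ref{03lem:elementary calculus} do the quantitative work. You do invoke that theorem correctly in your third paragraph, so this is an internal inconsistency rather than a missing idea, but the justification offered in the second paragraph would not stand on its own. (For odd $n$ your reasoning is fine: the interval $\bigl(-1,\cos\frac{2n+1}{2n+2}\pi\bigr)$ contains no eigenvalue halves, since its right endpoint is the smallest one.)
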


\begin{proof}
By virtue of Theorem \ref{04thm:factorization pf phi_n(x)} we
first note that
\[
\phi_n\left(\frac{x}{2}\right)
=\left(\frac{x}{2}-1\right)
 \cdot U^{\mathrm{e}}_n\left(\frac{x}{2}\right)
 \cdot S_n\left(\frac{x}{2}\right).
\]
Since $\det(x-A_n)=U_n(x/2)
=U^{\mathrm{e}}_n(x/2)\cdot U^{\mathrm{o}}_n(x/2)$,
any root of $U^{\mathrm{e}}_n(x/2)=0$ is an eigenvalue of $A_n$
and does not contribute to $\sigma_n$. 
On the other hand, by Theorem \ref{03thm:comparison beta and gamma}
the minimal root of $S_n(x/2)=0$ is given by $2\gamma_n$,
which satisfies
\begin{equation}\label{04eqn:2gamma for n odd}
2\gamma_{2n+1}
<2\cos\frac{2n+1}{2n+2}\pi=\omega^{(2n+1)}_{2n+1}
<2\cos\frac{2n}{2n+2}\pi=\omega^{(2n+1)}_{2n},
\qquad n\ge1,
\end{equation}
\begin{equation}\label{04eqn:2gamma for n even}
\omega^{(2n)}_{2n}=2\cos\frac{2n}{2n+1}\pi
<2\gamma_{2n}<2\cos\frac{2n-1}{2n+1}\pi=\omega^{(2n)}_{2n-1},
\qquad n\ge2.
\end{equation}
In particular, $2\gamma_n$ is not eigenvalue of $A_n$ for any $n\ge3$.
Consequently, 
\[
\sigma_n
=\min\{\alpha\in(-\infty,-1)\backslash \mathrm{ev}(A_n)\,;\,
\phi_n(\alpha/2)=0\}
=2\gamma_n.
\]
Inequalities \eqref{04eqn:sigma_2n+1 estimate}
and \eqref{04eqn:sigma_2n estimate} are immediate from
\eqref{04eqn:2gamma for n odd} and
\eqref{04eqn:2gamma for n even}, respectively.
\end{proof}

\begin{proof}[Proof of Theorem \ref{04thm:main formula for fan 1}]
We see from Lemmas \ref{04lem:tau_n} and \ref{04lem:sigma_n} that
\[
\min\{\sigma_n,\tau_n\}
=\min\{2\gamma_n,2\beta_n\}
=2\min\{\gamma_n,\beta_n\}
=2\alpha_n,
\qquad n\ge3, \quad n\neq 3,5,
\]
where Theorem \ref{04thm:alpha_n minimal zero of phi_n} is also
taken into account.
For $n=3,5$ since $\tau_n$ is not defined,
we have $\sigma_n=2\gamma_n=2\alpha_n$.
Therefore, for any $n\ge3$ we have
$\min\{\sigma_n,\tau_n\}=2\alpha_n$.
Then, using Proposition \ref{03prop:preliminary formula},
we conclude that $\tilde{\alpha}_n=\min\{\sigma_n,\tau_n\}=2\alpha_n$
for $n\ge3$.
For $n=1,2$ the relation $\tilde{\alpha}_n=2\alpha_n$ remains valid
by directly observing that $\alpha_1=\alpha_2=-1/2$ and
$\Tilde{\alpha}_1=\Tilde{\alpha}_2=-1$.
Consequently, the preliminary formula
\eqref{03eqn:main formula for QEC(K1+Pn)}
in Proposition \ref{03prop:preliminary formula} 
leads to $\mathrm{QEC}(K_1+P_n)=-2\alpha_n-2$ for $n\ge1$.
\end{proof}

\begin{proof}[Proof of Theorem \ref{04thm:main formula for fan 2}]
Relation \eqref{04eqn:main formula for even n} follows
from $\mathrm{QEC}(K_1+P_{2n})=-2\alpha_{2n}-2$ in
Theorem \ref{04thm:main formula for fan 1},
where the value of $\alpha_{2n}=\beta_{2n}=\cos (2n\pi)/(2n+1)$
is known by Theorem \ref{04thm:alpha_n minimal zero of phi_n}
and \eqref{03eqn:beta_n}.
Since 
\[
\cos\frac{2n+2}{2n+3}\pi
=\beta_{2n+2}<\gamma_{2n+1}=\alpha_{2n+1}<\cos\frac{2n+1}{2n+2}\pi
\]
by Lemma \ref{04lem:comparison of beta and gamma}
and Theorem \ref{03thm:comparison beta and gamma},
we obtain
\[
-2\cos\frac{2n+1}{2n+2}\pi-2
<\mathrm{QEC}(P_{2n+1})
<-2\cos\frac{2n+2}{2n+3}\pi-2,
\]
from which \eqref{04eqn:estimate for odd n} follows.
\end{proof}

{\bfseries Acknowledgements} \enspace
NO is supported by the JSPS Grant-in-Aid for Scientific Research
23K03126.


\end{document}